\numberwithin{equation}{section}
\newtheorem{prop}{Proposition}[section]
\newtheorem{lemm}[prop]{Lemma}
\theoremstyle{definition}
\newtheorem{defi}[prop]{Definition}
\newcommand{\hyp}{\mathcal{H}}
\newcommand{\gl}{\mathcal{G}}
\newcommand{\ellip}{\mathcal{E}}
\newcommand{\ham}{\mathsf{H}}
\newcommand{\diam}{\operatorname{diam}}
\newcommand{\loc}{\mathrm{loc}}
\renewcommand{\WF}{\mathrm{WF}_h}
\newcommand{\GBB}{\mathrm{GBB}}
\newcommand{\IC}{\mathsf{IC}}
\title[Diffractive trapping]{Resonance-free regions for diffractive trapping by
  conormal potentials}
\author{Oran Gannot and Jared Wunsch}
\address{Department of Mathematics, Lunt Hall, Northwestern University,
	Evanston, IL 60208, USA}
\email{gannot@northwestern.edu \and jwunsch@math.northwestern.edu}
\begin{document}

\begin{abstract}
  We consider the Schr\"odinger operator
  \[
P=h^2 \Lap_g +V
\]
on $\RR^n$ equipped with a metric $g$ that is Euclidean outside a
compact set.  The real-valued potential $V$ is assumed to be compactly
supported and smooth except at \emph{conormal
  singularities} of order $-1-\alpha$ along a compact hypersurface $Y.$
For $\alpha>2$ (or even $\alpha>1$ if the classical flow is unique),
we show that if $E_0$ is a non-trapping energy for the classical flow, then the operator $P$ has no
resonances in  a region
	\[
 [E_0 - \delta, E_0 + \delta] - i[0,\nu_0 h \log(1/h)].
      \]
     The constant $\nu_0$ is explicit in terms of $\alpha$ and
      dynamical quantities. We also show that the size of this resonance-free region is optimal for the class of piecewise-smooth potentials on the line.
\end{abstract}

\maketitle

\section{Introduction}

\subsection{Main results}
Let $X = \RR^n$, equipped with a smooth Riemannian metric $g$ such that $g_{ij} = \delta_{ij}$ outside a compact set. With $\Lap_g$ denoting the nonnegative Laplacian on $(X,g)$, consider the semiclassical Schr\"odinger operator with compactly supported potential,
\[
P = h^2\Lap_g + V.
\]
We assume that $V \in I^{[-1-\alpha]}(Y)$ is conormal to a compact hypersurface $Y \subset X$ with $\alpha > 1$. This notation means the following:
\begin{itemize} \itemsep6pt
	\item $V$ is $\CI$ away from $Y$.
	\item In local coordinates $(x^1,x')$ near $Y$, with $Y$ given by $\{x^1 = 0 \}$,
\[
V(x^1,x') = \int e^{ i x^1\xi^1} v(x,\xi^1) \, d\xi^1, \quad v \in S^{-1-\alpha}(\RR^n_x; \RR_{\xi^1}).
\]
\end{itemize}
Such a potential $V$ is thus $\alpha$ derivatives smoother than a step
function discontinuity across the interface $Y$, and by taking $\alpha>1$
we ensure that is at least $\mathcal{C}^{1,\gamma}$ for some $\gamma > 0$. Let $p = \sigma_h(P)$ denote the semiclassical principal symbol of $P$,
\[
p = |\xi|^2_g + V.
\]
The Hamilton vector field $\ham_p$ is continuous, hence always has global solutions. We further \emph{assume} that $\ham_p$ has unique integral curves; this is always true if $\alpha > 2$ (where $\ham_p$ is Lipschitz) but in general fails in the range $\alpha \in (1,2]$. In particular, there is a well-defined flow 
\[
\rho \mapsto \exp_{t\ham_p}(\rho)
\]
on $T^*X$, which is tangent to each energy surface $\{ p = E\}$.

Let $E_0 > 0$ be a \emph{non-trapping} energy level for the $\ham_p$
flow, i.e., assume that $\abs{x}\to \infty$ in both directions along all integral curves
of $\ham_p$ in $\{p=E_0\}.$ We show that for a suitable $\nu_0 > 0$ and $h,\delta> 0$ sufficiently small, there are no resonances of $P$ in the spectral window
\[
[E_0 - \delta, E_0 + \delta] - i[0,\nu_0 h \log(1/h)].
\]
The quantity $\nu_0 > 0$ has a dynamical characterization which we
discuss next. Let $\hyp_E \subset T^*Y$ denote the set of hyperbolic
points for $p - E;$ these are the points in phase space where the flow
is transverse
to $Y.$  More precisely, if we introduce normal coordinates $(x^1,x')$ for $g$ with respect to $Y$, so that $Y = \{x^1=0\}$ locally, then we can write
\begin{align*}
p(x,\xi) - E &= (\xi^1)^2 + \langle K(x)\xi',\xi'\rangle + V(x) - E \\
& \equiv (\xi^1)^2 - r(x,\xi',E)
\end{align*}
for a positive definite matrix $K(x)$. In these coordinates, 
\[
(x',\xi') \in \hyp_E \Longleftrightarrow r(0,x',\xi',E) > 0.
\]
We also remark for later use that the glancing set $\gl_E \subset T^*Y$ is defined in coordinates by the equation 
\[
r(0,x',\xi',E) = 0.
\]
For $x \in Y$, let $\pi: T^*_x X \rightarrow T^*_x Y$ denote the
canonical projection, which in local coordinates is just the map
$(0,x',\xi^1,\xi') \mapsto (x',\xi')$.  Note that $\pi$ is two-to-one over
$\hyp_E$ and one-to-one over $\gl_E.$

Given $E \in \RR$, we introduce the affine length of the longest $\ham_p$ trajectory connecting two hyperbolic points:
\begin{equation} \label{eq:diameter}
\diam_E(Y) = \sup \{ |t| : \text{ there exists $\rho \in \pi^{-1}(\hyp_E)$ with $\exp_{t\ham_p}(\rho) \in \pi^{-1}(\hyp_E)$}\}.
\end{equation}
Since $Y$ is compact, $\diam_E(Y)$ is finite for a non-trapping energy $E$. For $I \subset \RR$ we also define
\[
\diam_{I}(Y) = \sup \{ \diam_E(Y): E \in I\}.
\]
This is again finite if $I = [E_0-\delta,E_0+\delta]$ for $E_0$ non-trapping and $\delta>0$ sufficiently small.

\begin{theorem} \label{theo:resonancewidth}
	Let $(X,g)$ and $V \in I^{[-\alpha-1]}(Y)$ be as above. If $E_0 >0$ is non-trapping, then there exists $\delta_0 >0$ with the following property. Given $\delta \in (0,\delta_0)$ and
	\[
	0 < \nu_0 < \frac{\alpha}{\diam_{[E_0-\delta,E_0+\delta]}(Y)},
	\] 
	 there exists $h_0>0$ such that $P$ has no resonance $z$ with
	\begin{equation} \label{eq:window}
	z\in [E_0 - \delta, E_0 + \delta] - i[0,\nu_0 h \log(1/h)]
	\end{equation}
	for $h \in (0,h_0)$. If $\diam_{[E_0-\delta,E_0+\delta]}(Y) = 0$, then the conclusion is valid for any $\nu_0 \in (0,\infty)$.
\end{theorem}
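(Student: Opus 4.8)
\textbf{Overview and reduction.}
The plan is to derive the resonance-free window from an a priori polynomially-bounded resolvent estimate for the complex-scaled operator, the crucial new ingredient being a quantitative diffraction estimate: each transversal passage of a bicharacteristic through $Y$ costs a factor $O(h^{\alpha}\log(1/h))$ in amplitude. Write $D := \diam_{[E_0-\delta,E_0+\delta]}(Y)$. Since $g$ is Euclidean and $V\equiv 0$ outside a large ball, I would first apply complex scaling for $|x|>R$ to obtain a non-self-adjoint operator $P_\theta$ whose eigenvalues in \eqref{eq:window} are exactly the resonances of $P$ there. It then suffices to prove that for $z$ as in \eqref{eq:window}, every solution of $(P_\theta-z)u=f$ obeys $\|u\|\le Ch^{-N}\|f\|$ with $N$ fixed and $h$ small; this excludes poles in the window. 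The estimate will be assembled by microlocal patching.

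\textbf{Easy regions.}
Away from $Y$ and away from bicharacteristics that return to $Y$, I would combine the elliptic estimate, the exponential damping from complex scaling near infinity, and real-principal-type propagation of singularities along the $\ham_p$ flow — non-trapping at $E_0$ by hypothesis — to bound $u$ there by $Ch^{-1}\|f\|$ plus an error microsupported near $\pi^{-1}(\hyp_E)\cup\pi^{-1}(\gl_E)$. Near the glancing set $\gl_E$, where bicharacteristics are tangent to $Y$, I expect such rays to graze $Y$ and escape (again by non-trapping), while the conormal singularity of $V$ — only $\mathcal{C}^{1,\gamma}$ once $\alpha>1$ — excites no diffracted wave along a grazing ray of size relevant to the estimate; this region is then also controlled by $Ch^{-1}\|f\|$. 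This grazing analysis is presumably where the hypotheses $\alpha>1$ (so $V\in \mathcal{C}^{1,\gamma}$) and $\alpha>2$-or-uniqueness-of-the-flow enter.

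\textbf{The diffraction estimate.}
The core step is the analysis near a hyperbolic point $(x_0',\xi_0')\in\hyp_E$, where a bicharacteristic crosses $Y$ transversally. Using $V(x^1,x')=\int e^{ix^1\xi^1}v(x,\xi^1)\,d\xi^1$ with $v\in S^{-1-\alpha}$, I would construct a microlocal parametrix for $P_\theta-z$ near such a point in which the outgoing microlocal solution equals the geometrically continued incoming solution (of comparable size) plus a \emph{diffracted} term — a Lagrangian distribution associated to the flowout of the point of $Y$ — whose amplitude is smaller than the incoming data by $O(h^{\alpha}\log(1/h))$, the gain being dictated by the order $-1-\alpha$ of the symbol $v$. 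Thus passing $u$ through $Y$ is lossless along geometric continuations and loses a factor $h^{\alpha}$ into each newly excited direction.

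\textbf{Iteration over logarithmic time, and the obstacle.}
Finally I would propagate for a time $T_0$ equal to a fixed multiple of $D$ (if $D=0$ there is no trapped returning bicharacteristic and the previous steps already close the estimate with no constraint on $\nu_0$). By definition \eqref{eq:diameter}, any bicharacteristic segment with both endpoints over $\hyp_E$ has length $\le D$, so a ray that stays relevant over a window of length $T_0$ must pass transversally through $Y$ at least $\sim T_0/D$ times, each passage contributing $O(h^{\alpha}\log(1/h))$ to the trapped part of $u$. Propagating for time $T_0$ and inserting $(P_\theta-z)u=f$ gives
\[
\|u\|_{\mathrm{trap}} \le C_{T_0}h^{-1}\|f\| + C_{T_0}\,e^{T_0|\mathrm{Im}\,z|/h}\,\bigl(h^{\alpha}\log(1/h)\bigr)^{T_0/D}\|u\|_{\mathrm{trap}},
\]
the factor $e^{T_0|\mathrm{Im}\,z|/h}$ accounting for the passage from the Schr\"odinger propagator estimate to the resolvent. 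Since $|\mathrm{Im}\,z|\le\nu_0 h\log(1/h)$ with $\nu_0<\alpha/D$, the coefficient of $\|u\|_{\mathrm{trap}}$ on the right equals $C_{T_0}\,h^{(\alpha/D-\nu_0)T_0}(\log(1/h))^{T_0/D}$, whose exponent is eventually $\ge\tfrac12(\alpha/D-\nu_0)T_0>0$, hence the coefficient is $<1/2$ for $h$ small; absorbing it on the left and combining with the easy regions yields $\|u\|\le Ch^{-N}\|f\|$. Shrinking $\delta<\delta_0$ so that $\hyp_E$, $\gl_E$, and $D$ vary controllably for $E\in[E_0-\delta,E_0+\delta]$ completes the argument. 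I expect the main obstacle to be the diffractive parametrix with the sharp $h^{\alpha}$ gain uniformly over all of $\hyp_E$, together with making the glancing analysis mesh with it near $\gl_E$.
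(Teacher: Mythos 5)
Your route is fundamentally different from the paper's, and the step you yourself flag as the ``main obstacle'' is indeed a genuine gap that the paper deliberately avoids. You want a direct resolvent bound $\|u\|\le Ch^{-N}\|f\|$ built from a diffractive parametrix near each hyperbolic point carrying an explicit $O(h^{\alpha}\log(1/h))$ amplitude gain, uniform over $\hyp_E$. No such parametrix is constructed in \cite{GaWu:18} or elsewhere for conormal potentials in $n>1$: the cited input (Theorem~\ref{theo:diffractiveimprovements}) is a purely \emph{wavefront-set} statement --- incoming singularity at order $s$ implies the geometric continuation stays at order $s$ or the diffracted one appears at order $s-\alpha$ --- with no amplitude bound, no $\log$ factor, no Lagrangian structure. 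Producing the quantitative parametrix you need, making it uniform up to the glancing set where transversality degenerates, and then obtaining a propagator/resolvent bound for the non-self-adjoint $P_\theta$ from it, would be a much larger technical project than the theorem itself. You also misattribute the hypotheses: $\alpha>1$ (resp.\ $\alpha>2$) is what makes $\ham_p$ continuous (resp.\ Lipschitz) so that the flow exists (resp.\ is unique); these are global requirements for the whole argument, not features of the grazing analysis.

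The paper's proof is a soft contradiction argument requiring only the qualitative propagation result, and the iteration you envisage is replaced by a single pass. One sets $w=e^{-izt/h}u$ for a resonant state $u$ and works with the time-dependent operator $Q_\theta=hD_t+P_\theta$. The lifting lemma (Lemma~\ref{lem:lifting}) converts the factor $e^{-izt/h}$ into a Sobolev-order shift: a wavefront point of $w$ at order $s$ on the time slice $t_0=-T$ is a wavefront point of $u$ at order $s+\nu(h)T+o(1)$. Lemma~\ref{lem:mustdiffract} shows that from a point of $\WF^s(u)\cap\Gamma_E^-$, propagating backward at most $\diam_E(Y)$ reaches a hyperbolic point where the diffracted branch lies in $\WF^{s-\alpha}(w)$ and again belongs to $\Gamma_E^-$. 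Defining
\[
s_0=\inf\{s:\WF^s(u)\cap\Gamma_E^-\neq\emptyset\}
\]
(which is finite because $\WF^\alpha(u)\neq\emptyset$ while $\WF^0(u)=\emptyset$), starting from $s$ just above $s_0$ and applying one diffraction produces a new $\Gamma_E^-$ point at order $s-\alpha+\nu_0\diam_E(Y)+o(1)$; the defining property of $s_0$ then forces $\nu_0\,\diam_E(Y)\ge\alpha$, a contradiction with $\nu_0<\alpha/\diam_{[E_0-\delta,E_0+\delta]}(Y)$. So the paper never needs a quantitative estimate near $\hyp_E$ or $\gl_E$; the definition of $s_0$ as an infimum does all the bookkeeping that you were trying to do by counting crossings over a macroscopic time $T_0$ and applying Rouch\'e-style absorption.
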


When $V$ is smooth (so that $\alpha$ can be taken arbitrarily large), the fact that Theorem \ref{theo:resonancewidth} holds for arbitrary $\nu_0 \in (0,\infty)$ is originally due to Martinez \cite{martinez2002resonance}.

In Proposition~\ref{prop:asymptotics}, we give an application of (a
slightly more quantitative version of)
Theorem~\ref{theo:resonancewidth} to the asymptotic behavior of
solutions to the time-dependent semiclassical Schr\"odinger equation.

The size of the resonance-free region in Theorem \ref{theo:resonancewidth} is already optimal for the class of compactly supported piecewise-smooth potentials on $\RR$. Consider the operator $P = (hD_x)^2 + V$, where $V$ satisfies the following properties:
\begin{enumerate} \itemsep6pt 
	\item There exists $L > 0$ such that $\supp V \subset [0,L]$.
	\item The restriction of $V$ to $[0,L]$ is smooth.
\end{enumerate}
 If $V$ vanishes to order $k$ at $x=0$ and to order $l$ at $x=L$, then $V \in I^{[-1-\min(k,l)]}(\{0,L\})$. For use in Theorem \ref{theo:existence} below define the quantity
 \[
\phi = (1/2\pi)\arg( V^{(k)}(0^+)\cdot V^{(l)}(L^-)),
 \]
 which arises as the phase shift in a Bohr--Sommerfeld type formula. Consider a spectral interval $[a,b]$, where 
\[
a > \sup V.
\]
Certainly any energy $E \in [a,b]$ is nontrapping for the $\ham_p$ flow. Define (half) the action and period by
\begin{equation}\label{actionandperiod}
S(E) = \int_{0}^{L} (E-V(s))^{1/2}\, ds, \quad T(E) = \int_{0}^{L} \frac{1}{2(E-V(s))^{1/2}} \, ds.
\end{equation}
Of course this is a slight abuse of terminology, since the endpoints are not turning points for the classical dynamics. Observe that 
\[
T(E) = \diam_E(\{0,L\})
\] in the notation of \eqref{eq:diameter}. 

Noting that $S(E)$ is increasing on any interval $[a,b]$ as above, define $[\alpha,\beta] = S([a,b])$ and then set
\[
N(h) = \{ n \in \ZZ: \pi(n  + (l-k)/4 + \phi) \in h^{-1}[\alpha,\beta]\}.
\] 
Note that $\sharp N(h) \sim h^{-1}|\beta-\alpha|/\pi$. We then have the following semiclassical analogue of \cite[Theorem 6]{Zworski4} on the existence and asymptotics of resonances.

\begin{theorem} \label{theo:existence}
Let $k$ and $l$ denote the orders of vanishing of $V$ at $x=0$ and $x=L$, respectively. For each $n \in N(h)$ let 
\[
E_n = S^{-1}(\pi h(n  + (l-k)/4 + \phi)).
\]
There exists $h_0> 0$ such that for each $h\in(0,h_0)$ and $n \in N(h)$ there is a unique resonance $z_n$ satisfying
\begin{equation}\label{1dresonances}
\begin{aligned}
z_n &= E_n - \frac{i(l+k)}{2T(E_n)} h \log (1/h)\\ &+ \frac{ih}{2T(E_n)} \left( \log |V^{(k)}(0^+)\cdot  V^{(l)}(L^-)| - (1/2)(l+k+4) \log(4 E_n) \right) \\ &+ \mathcal{O}(h^2 \log(1/h)^2).
\end{aligned}
\end{equation}
Furthermore, if $M>0$ is sufficiently large, then $\{z_n: n \in N(h)\}$ are all the resonances in $[a,b] + i[-M h\log(1/h),0]$.
\end{theorem}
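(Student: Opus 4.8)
The plan is to characterize resonances of $P = (hD_x)^2 + V$ as the zeros of the Wronskian of its two Jost solutions, and then to compute that Wronskian asymptotically by gluing exact WKB on the smooth interval $(0,L)$ to a connection analysis at the two conormal points $x=0$ and $x=L$.

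\textbf{Step 1: the resonance condition.} Because $V$ is compactly supported, the meromorphic continuation of $(P-z)^{-1}$ from $\{\operatorname{Im} z>0\}$ has a pole at $z$ precisely when the Jost solutions $f_\pm(\,\cdot\,,z,h)$ --- normalized so that $f_\pm(x)=e^{\pm i\sqrt z x/h}$ for $\pm x$ large --- are linearly dependent, i.e.\ $W(z,h):=W(f_+,f_-)$ vanishes. Evaluating the Wronskian on $\{x<0\}$ gives $W=-2i\sqrt z\,/\,t(z,h)$ with $t$ the transmission coefficient, so resonances are exactly the poles of $t$. It therefore suffices to locate, in the box $[a,b]+i[-Mh\log(1/h),0]$, the zeros of a function which, by Step 2, has the form $W(z,h)=g(z,h)\bigl(1-\mathsf r_0(z,h)\,\mathsf r_L(z,h)\,e^{2iS(z)/h}\bigr)$ with $g$ nowhere zero, $S(z)=\int_0^L(z-V(s))^{1/2}\,ds$, and $\mathsf r_0,\mathsf r_L$ the reflection coefficients produced at the two singular points.

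\textbf{Step 2: exact WKB and the connection problem (the crux).} On $(0,L)$ the potential is smooth, and since $z$ lies near $[a,b]$ with $a>\sup V$ the classical energy surface has no turning point; Liouville--Green then produces genuine solutions $w_\pm(x,z,h)=(z-V(x))^{-1/4}\exp(\pm i\phi(x,z)/h)(1+O(h))$, $\phi(x,z)=\int_0^x(z-V)^{1/2}$, with remainders uniformly controlled on $[0,L]$ (degrading only mildly at the endpoints, where $V$ is merely $C^{k-1}$, resp.\ $C^{l-1}$). The substantive step is to connect $w_\pm$ to the free exponentials across $x=0$: matching $\psi$ and $h\psi'$ there, a wave incident from the cavity is transmitted with coefficient $1+O(h)$ and reflected with coefficient $\mathsf r_0(z,h)$, and the point is that $\mathsf r_0=O(h^k)$ exactly because $V$ is only $C^{k-1}$ at $0$. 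Quantitatively, after subtracting the Liouville--Green parametrix the residual solves an inhomogeneous equation whose source is concentrated near $x=0$ and governed by the jump of $V^{(k)}$; one Duhamel iteration (further iterations, and the smooth part of $V$, contribute only $O(h^{k+1})$, resp.\ $O(h^\infty)$, to the reflected amplitude) identifies the leading reflection coefficient as an explicit multiple of the Fourier transform of $x_+^k$ at frequency $\mp 2\sqrt z/h$. Multiplying the two endpoint contributions,
\[
\mathsf r_0(z,h)\,\mathsf r_L(z,h)=c_{k,l}\,\frac{V^{(k)}(0^+)\,V^{(l)}(L^-)}{(4z)^{(k+l+4)/2}}\,h^{k+l}\,\bigl(1+O(h)\bigr),
\]
with $c_{k,l}$ an explicit unimodular constant depending only on $k,l$; in particular $\arg c_{k,l}$ furnishes the Maslov-type phase $(l-k)/4$ and $\arg\!\bigl(V^{(k)}(0^+)V^{(l)}(L^-)\bigr)=2\pi\phi$. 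Making this rigorous with the correct modulus \emph{and} phase will be the main obstacle, since the naive Born series diverges ($V$ is not small): one must arrange the expansion so that the spurious $O(h)$ ``smooth-endpoint'' contributions cancel against the WKB matching, leaving only the genuinely singular $O(h^k)$ and $O(h^l)$ reflections --- the one-dimensional shadow of the diffractive analysis carried out in the body of the paper.

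\textbf{Step 3: solving the quantization condition.} By Steps 1--2 the resonance condition is $\mathsf r_0(z,h)\mathsf r_L(z,h)\,e^{2iS(z)/h}=1$; taking logarithms,
\[
\frac{2i}{h}S(z)+\log\bigl(\mathsf r_0\mathsf r_L\bigr)=2\pi i m,\qquad m\in\ZZ.
\]
As $S$ is real-analytic on $[a,b]$ with $S'=T\neq0$, the real part of this equation pins down
\[
\operatorname{Im} z=\frac{h}{2T(E)}\Bigl[(k+l)\log h+\log\bigl|V^{(k)}(0^+)V^{(l)}(L^-)\bigr|-\tfrac{k+l+4}{2}\log(4E)\Bigr]+O\bigl(h^2\log(1/h)^2\bigr),
\]
i.e.\ the claimed depth, while the imaginary part is, to leading order, the Bohr--Sommerfeld relation $S(\operatorname{Re} z)/h\in\pi\bigl(\ZZ+(l-k)/4+\phi\bigr)$, whose solutions with $\operatorname{Re} z\in[a,b]$ are indexed exactly by $n\in N(h)$ and have $\operatorname{Re} z=E_n+O(h\log(1/h))$. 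For each such $n$, a contraction-mapping (equivalently Rouch\'e) argument in a disk of radius $\ll h$ but $\gg h^2\log(1/h)^2$ about the predicted centre $E_n-\tfrac{i(k+l)}{2T(E_n)}h\log(1/h)+\tfrac{ih}{2T(E_n)}\bigl(\log|V^{(k)}(0^+)V^{(l)}(L^-)|-\tfrac{k+l+4}{2}\log(4E_n)\bigr)$ gives a unique solution $z_n$; expanding $S$ and $T$ about $E_n$ to second order (the quadratic term, of size $O(h^2\log(1/h)^2)$, is the dominant error) and retaining the $O(h)$ correction to $\log(\mathsf r_0\mathsf r_L)$ yields the expansion \eqref{1dresonances}.

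\textbf{Step 4: completeness.} That the $z_n$ exhaust the resonances in $[a,b]+i[-Mh\log(1/h),0]$ follows from the argument principle applied to $W(z,h)$ along the boundary of a slightly enlarged box, with $a,b$ taken away from the $E_n$ (automatic for small $h$). On the top edge $|\mathsf r_0\mathsf r_L e^{2iS/h}|\le C h^{k+l}\ll1$, so $W$ does not wind; on the bottom edge, once $M>(k+l)/(2T)$, one has $|\mathsf r_0\mathsf r_L e^{2iS/h}|\ge c\,h^{\,k+l-2TM}\gg1$, whence the winding of $1-\mathsf r_0\mathsf r_L e^{2iS/h}$ equals that of $\mathsf r_0\mathsf r_L e^{2iS/h}$, namely the total increment of $\tfrac1\pi\operatorname{Re} S(z)/h$ as $\operatorname{Re} z$ runs over $[a,b]$; on the vertical edges $|e^{2iS/h}|$ is monotone and $\arg\bigl(\mathsf r_0\mathsf r_L e^{2iS/h}\bigr)$ essentially constant, so there is no net winding. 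The resulting zero count is $\sharp N(h)$, and these zeros are the $z_n$ found above.
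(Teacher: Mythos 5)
Your overall architecture --- characterize resonances via a quantization condition of the form $1-\mathsf r_0\mathsf r_L e^{2iS/h}=\mathcal O(h^\infty)$, identify the reflection coefficients at the two conormal endpoints as $\sim h^k$ and $\sim h^l$ with explicit leading constants, then solve by Rouch\'e/argument principle --- is the right shape and matches the target formula. Steps 1, 3, and 4 would go through given the input you posit from Step 2.

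The genuine gap is precisely the one you flag yourself in Step 2, and it is not a routine technicality. If you run Liouville--Green in amplitude--phase form $(z-V)^{-1/4}e^{\pm i\phi/h}(1+\mathcal O(h))$, then matching at an endpoint where $V$ is merely $C^{k-1}$ produces an infinite ladder of $\mathcal O(h),\mathcal O(h^2),\ldots,\mathcal O(h^{k-1})$ contributions coming from derivatives of the amplitude $(z-V)^{-1/4}$, and you must show all of them cancel against the Duhamel corrections before the genuinely singular $\mathcal O(h^k)$ reflection survives, \emph{with its exact coefficient}. Your Step 2 acknowledges this (``the main obstacle'') but does not supply a mechanism. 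The paper's device that fills exactly this hole is the choice of \emph{exponential} WKB, $u_\pm=\exp\bigl(\pm ih^{-1}\int_0^x\psi_\pm\bigr)$ with the whole asymptotic series in the exponent, $\psi_\pm\sim\sum h^j\psi_{\pm,j}$, together with the lemma that $\psi_{\pm,j}=\bigl(\tfrac{\pm i}{2\psi_0}\bigr)^j\psi_0^{(j)}+F_j(\psi_0,\ldots,\psi_0^{(j-1)})$ with $F_j(t_0,0,\ldots,0)=0$. This makes $\psi_{\pm,j}(x_0)=0$ automatically for $1\le j\le k-1$ when $V$ vanishes to order $k$ at $x_0$, and gives the closed form $\psi_{\pm,k}(x_0)=-i^{\pm k}(2z^{1/2})^{-k-1}V^{(k)}(x_0)$. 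The unwanted lower-order terms never appear; there is nothing to cancel. Without some substitute for this (or an explicit proof that your Born/Duhamel ladder telescopes), the coefficient and phase in your claimed formula for $\mathsf r_0\mathsf r_L$ are unverified, and the leading imaginary part of $z_n$ --- the whole content of the theorem --- is not actually established. A secondary, smaller issue: your Step 1 identity $W=g\bigl(1-\mathsf r_0\mathsf r_L e^{2iS/h}\bigr)$ with $g$ nowhere zero and holomorphic on all of $\Omega_M(h)$ also needs proof; the paper obtains the analogous statement by the semiclassical Wronskian computation in its Lemma~3.4 together with the exact outgoing boundary conditions at $x=0,L$, which is worth reproducing rather than asserting.
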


Because $\pa_E S(E) = T(E)$ and $T(E)$ is uniformly positive for $E \in [a,b]$ it follows that 
\[
|z_n - z_{m}| \geq Ch|n-m|
\]
 for some $C>0$ and each $n,m \in N(h)$. Thus the $z_n$ are all distinct, which gives $h^{-1}|\beta-\alpha|/\pi$ as an asymptotic formula for the number of resonances in $[a,b] + [-iMh\log(1/h),0]$.

\subsection{Context and previous work}

When diffraction of singularities is the only trapping, an increasing
body of work suggests that resonances may occur at
$\Im z \sim -C h \log(1/h)$ for various values of $C>0,$ but no closer
to the real axis.  This is
farther into the lower half-plane than the resonances occurring in cases of elliptic trapping (where they rapidly approach the real axis as $h \rightarrow 0$) or even
for hyperbolic trapped sets, where there is an $\mathcal{O}(h)$ resonance-free
region (see \cite{1209.0843} for references on the subject of
classical dynamical trapping and resonances).  In the diffractive
case, the regularization of the trapped wave with each successive
diffraction is by contrast responsible for the faster rate of decay as measured by
resonance width.

Most of the literature substantiating this heuristic is in the
homogeneous rather than the
semiclassical setting; there the analog of resonances at $E_0-i\nu_0 h \log(1/h)$ are resonances close to a curve
\begin{equation}\label{logcurve}
\Im \lambda \sim -C \log \abs{\Re \lambda}
\end{equation}
as $\smallabs{\lambda} \to \infty.$
That diffraction of singularities \emph{can} in fact create strings of
resonances along such log curves was demonstrated in the homogeneous
setting by Zworski \cite{Zworski4} (see also earlier work by Regge
\cite{Regge:Analytic}).  In the setting of diffraction by analytic
corners in the plane, Burq \cite{Burq:Coin} likewise showed that resonances lie asymptotically on families of curves \eqref{logcurve}
for various values of $C>0.$

In the setting of manifolds with conic singularities, where similar
diffractive propagation occurs, a number of
recent theorems have explored the same theme.
Baskin--Wunsch \cite{BaWu:13} showed on the one hand that
\emph{some} nontrivial region of the form
\[
\Im \lambda > -\nu_0 \log \smallabs{\Re \lambda}, \quad \quad \abs{\lambda}>R
\]
contains no resonances (subject to some genericity conditions on the
relationship among the conic singularities) --- this is analogous to the
gap theorem obtained here.  Galkowski \cite{Ga:15}
then found the largest $\nu_0$ which could
be obtained by the Vainberg parametrix method employed in \cite{BaWu:13}:
\[
\nu_0=(n-1)/(2L_0),
\]
with $n$ the dimension
and $L_0$ the maximal distance between
cone points.  Work of Hillairet--Wunsch using a
trace formula of Ford--Wunsch \cite{FoWu:17} showed that this
constant was in general optimal by proving existence of resonances
with $\Im \lambda \sim -\nu_0 \log \abs{\Re \lambda},$
while \cite{HiWu:17} refined the
description of the resonances on and near this curve.

Closely related work of Datchev--Kang--Kessler \cite{datchev2013nontrapping} studied the distribution of resonances on surfaces of revolution with a cone point and a funnel (as well as other types of infinite ends). The authors find examples where the Laplacian admits resonances with $\Im \lambda \sim -C\log|\Re \lambda|$ as $|\Re \lambda| \rightarrow \infty$, although the classical flow is non-trapping; in such cases the metric is continuous with a conormal singularity.

The results here are, to the best of our knowledge, the first results
on resonances generated by diffractive trapping for semiclassical
Schr\"odinger operators.  The crucial new technical ingredient is the
propagation of singularities results recently obtained by the authors
in \cite{GaWu:18}, which include estimates on the size of the wave of
diffractively reflected singularities.  These singularities bounce back off of even a
mild singularity of $V$ in violation of the most naive application of
the principle of geometric optics, which would say that wavefront set
travels along classical trajectories.

The authors intend in future work to complement the results here,
which show a resonance-free region in all dimensions and existence in
one-dimension, by showing the existence of resonances just below the
gap obtained in Theorem~\ref{theo:resonancewidth}
  in all dimensions, at least in settings where the dynamics is
tractable.  We thus conjecture, based on the evidence of
Theorem~\ref{theo:existence}, that the $\nu_0$
obtained in Theorem~\ref{theo:resonancewidth} is optimal in general
(at least generically).

\subsection*{Acknowledgements}
The authors are grateful to Nicolas Burq and Jeff Galkowski for
helpful discussions, as well as to an anonymous referee for helpful
comments on the manuscript.
OG was partially supported by NSF grant DMS--1502632; JW was partially
supported by NSF grant DMS--1600023.

\section{Resonances}
\subsection{Complex scaling} \label{subsect:complexscaling}

We define resonances of $P$ by the method of complex scaling following \cite[Sections 4.5, 6.2.1]{DyZw:15}. Fix $R_0 > 0$ such that outside of $B(0,R_0)$,
\[
V = 0, \quad g_{ij} = \delta_{ij},
\]
and let $R_1 > R_0$. We define the complex scaled operator $P_\theta$
using a contour $\Gamma_{\theta} = f_\theta(\RR^n)$, where
$f_\theta(x) = x + i \pa_x F(x)$, and $F$ is a smooth convex function
satisfying 
\[
F =0 \text{ near  }B(0,R_1), \quad F = \frac{\tan\theta}{2}|x|^2 \text{ for } |x| \geq 2R_1.
\]
Here we take any fixed $\theta \in (0,\pi/2)$. In particular, the semiclassical principal symbol of $p_\theta$ is given by
\[
p_\theta (x,\xi) = \begin{cases} 
\langle (1 + i \nabla^2 F(x))^{-2}\xi,\xi \rangle, & |x| > R_0, \\
|\xi|_g^2 + V(x) & |x| < R_1.
\end{cases}
\]
Referring to the proof of \cite[Proposition 6.10]{DyZw:15}, we record the following important observation: for an interval $I$,
\begin{equation} \label{eq:flowsagree}
\begin{split} 
\exp_{t \ham_{\Re p_\theta}}(\rho) \in \{ \Im p_\theta = 0\} &\text{ for all } t\in I \\
\Longrightarrow \exp_{t \ham_{\Re p_\theta}}(\rho) = \exp_{t \ham_p}(\rho) &\text{ for all } t\in I. 
\end{split}
\end{equation}

\subsection{The time-dependent Schr\"odinger equation} \label{subsect:tdschrodinger}

Define \[M = \RR_t \times X,\quad Y_M=\RR_t \times Y.\]
As a preliminary step, we pass to the time-dependent semiclassical Schr\"odinger operator on $M$ given by
\[
Q = hD_t + P,
\]
where
$$
D_t = \frac 1 i \frac{\pa}{\pa t}.
$$
If $\tau$ denotes the momentum dual to $t$, then the principal symbol of $Q$ (which does not depend on $t$) is
\[
q(x,\tau,\xi) = \tau + p(x,\xi).
\]
Note that $\tau$ is conserved under the $\ham_q$ flow, and $t$ evolves at unit speed.
We will also work with the complex-scaled operator $Q_\theta = hD_t + P_\theta$, writing $q_\theta = \tau + p_\theta$ for its principal symbol. Since $\Im q_\theta(\tau,x,\xi) = \Im p_\theta(x,\xi)$ for all $\tau$, observe that \eqref{eq:flowsagree} also holds with $q$ replacing $p$.

Given $E \in \RR$, consider the joint energy surface $\{q=0, \,
\tau = -E\}$. In general, there may be non-trivial behavior of
solutions to $Qw=0$ in the characteristic set of $\langle (\tau,\xi)\rangle^{-2} q$ at fiber-infinity in $\overline{T}^*M$, since $Q$ is not elliptic in the non-semiclassical sense. However, we explicitly avoid such issues by restricting to $\tau = -E$. Thus 
\[
(t,x,-E,\xi) \in \{q=0, \, \tau = -E\} \Longleftrightarrow (x,\xi) \in \{p=E\},
\]
with similar observations for $p_\theta$ and $q_\theta$.

We now consider propagation of singularities for the operator $Q$ near $T^*_{Y_M}M$. Define the lift of $\hyp_E$ by 
\[
\hat \hyp_E = \{(t,x',-E,\xi') : (x',\xi') \in \hyp_E\} \subset T^*Y_M,
\]
with the analogous definition for $\hat \gl_E$. 
We also write $\hat \pi : T^*_{m} M \rightarrow T^*_m Y_M$ for the projection whenever $m \in Y_M$.
Adapting the results of \cite{GaWu:18}, we have the following theorem on propagation of singularities for $Q$, stated locally:
\begin{theorem} \label{theo:diffractiveimprovements}
	Let $w = w(h)$ be $h$-tempered in $H^1_{h,\loc}(M)$ such that $Q w = \mathcal{O}(h^\infty)_{L^2_\loc}$.
	\begin{enumerate} \itemsep6pt 
		\item 
		If $q \in \hat \hyp_E$, let $\mu_\pm \in \{q=0, \tau = -E\}$ be the preimages of $q$ under $\hat \pi$ with opposite normal momenta. If $\mu_+ \in \WF^s(w)$ for some $s\in \RR$, then there exists $\varepsilon > 0$ such that
		\[
		\exp_{-t\ham_{q}}(\mu_+) \subset \WF^s(w), \text{ or } \exp_{-t\ham_{q}}(\mu_-) \subset \WF^{s-\alpha}(w),
		\]
		or both, for all $ t\in (0,\varepsilon)$.
		
		\item If $q \in \hat \gl_E$, let $\mu \in \{q=0, \tau = -E\}$ be the unique preimage of $q$ under $\hat \pi$ with vanishing normal momentum. If $\mu \in \WF^s(w)$ for some $s \in \RR$, then there exists $\varepsilon > 0$ such that
		\[
		\exp_{-t\ham_{q}}(\mu) \subset \WF^s(w)
		\] 
		for all $t \in (0,\varepsilon)$.
	\end{enumerate}
\end{theorem}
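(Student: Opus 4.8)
The plan is to derive Theorem~\ref{theo:diffractiveimprovements} from the propagation-of-singularities results of \cite{GaWu:18}, which study diffraction of semiclassical singularities by a conormal potential singularity and, crucially, quantify the strength of the diffractively reflected wave. Granting those results, the remaining tasks are: (i) to observe that the only genuinely new phenomenon occurs over $Y_M$, everything else being ordinary propagation of real principal type; (ii) to bring $Q$ into the model form used in \cite{GaWu:18} by choosing normal coordinates; and (iii) to match the regularity bookkeeping, where the loss of $\alpha$ orders on the diffracted branch is precisely the conormal order of $V$.

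For (i): away from $T^*_{Y_M}M$ the potential $V$ is $\CI$, so $Q$ is a semiclassical differential operator with smooth real principal symbol $q=\tau+p$, and $\ham_q$ does not vanish on $\{q=0\}\cap\{\tau=-E\}$ because $\pa_\tau q\equiv1$. The semiclassical propagation theorem of real principal type (as in \cite{DyZw:15}) thus propagates $\WF^s(w)$ \emph{without change of order} along $\ham_q$ until a trajectory meets $Y_M$, so only $T^*_{Y_M}M$ requires attention. Because $\tau=-E$ on $\hat\hyp_E\cup\hat\gl_E$, $\tau$ is conserved, and $t$ advances at unit speed, it suffices to argue in an arbitrarily small neighborhood of a single point $q\in\hat\hyp_E\cup\hat\gl_E$ with $|\tau+E|$ small; and since $\supp V$, hence $Y$, lies inside $B(0,R_0)$ where $F$ vanishes, one has $Q=Q_\theta$ near $Y_M$, so complex scaling plays no role.

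For (ii) and (iii): Fermi normal coordinates $(t,x^1,x')$ with $Y_M=\{x^1=0\}$ bring $Q$ into the model form $hD_t+(hD_{x^1})^2+Q'(x,hD_{x'})+V(x)$, modulo a lower-order semiclassical operator, where $\sigma_h(Q')=\langle K(x)\xi',\xi'\rangle$ and $V\in I^{[-1-\alpha]}(\{x^1=0\})$; this is the setting of \cite{GaWu:18}. In the hyperbolic case $q\in\hat\hyp_E$, the two preimages $\mu_\pm$ of $q$ have normal momenta $\xi^1=\pm\sqrt{r(0,x',\xi',E)}\neq0$, so the bicharacteristics through them cross $Y_M$ transversally; the main theorem of \cite{GaWu:18} then supplies exactly the asserted dichotomy --- either the transmitted branch $\exp_{-t\ham_q}(\mu_+)$ carries the full order $s$, or the reflected branch $\exp_{-t\ham_q}(\mu_-)$ carries order $s-\alpha$ --- the shift by $\alpha$ reflecting that a wave conormal of order $s$ meeting a potential conormal of order $-1-\alpha$ produces a scattered wave conormal of order $s-\alpha$. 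In the glancing case $q\in\hat\gl_E$ there is a single preimage $\mu$ with $\xi^1=0$, and the relevant input from \cite{GaWu:18} is no-loss propagation of $\WF^s(w)$ along the glancing ray; here the standing assumption that $\ham_p$ --- hence $\ham_q$ --- has unique integral curves is essential, since otherwise $\exp_{-t\ham_q}(\mu)$ is not even defined.

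The deep ingredient, namely the sharp accounting of the diffractive loss through a conormal (second-microlocal) analysis along $Y$ of the wave scattered by the singularity of $V$, is imported essentially directly from \cite{GaWu:18}. I therefore expect the principal difficulty here to be the adaptation itself: if the estimates of \cite{GaWu:18} are phrased in a non-semiclassical or differently normalized framework, one must transfer the underlying positive-commutator and energy arguments to the semiclassical calculus while tracking powers of $h$ so that the orders $s$ and $s-\alpha$ emerge correctly, verify that $w=w(h)$ meets the temperedness and regularity hypotheses required there, and check that the microlocal estimates obtained at $Y_M$ dovetail with the real-principal-type propagation of step (i) to give the stated conclusions for all $t\in(0,\varepsilon)$.
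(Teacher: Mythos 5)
Your overall strategy---reduce the statement to the propagation and diffractive-improvement results of \cite{GaWu:18}---is the same as the paper's, and your observations about coordinates, the transversality at hyperbolic points, and the use of unique integral curves at glancing points are all on target. However, you misdiagnose where the adaptation actually lies, and in doing so claim a reduction that is not quite there.

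You write that in Fermi normal coordinates $Q$ becomes $hD_t+(hD_{x^1})^2+Q'(x,hD_{x'})+V(x)$ and that ``this is the setting of \cite{GaWu:18}.'' It is not: \cite{GaWu:18} treats the time-independent operator $P=h^2\Delta_g+V$ on $X$, with no $hD_t$ term. The operator here is $Q=hD_t+P$ on $M=\RR_t\times X$, and the presence of $hD_t$ is exactly what makes the appeal nontrivial. Your hedge about whether \cite{GaWu:18} might be ``non-semiclassical or differently normalized'' is a red herring---it is semiclassical and in the same normalization---while the two genuine obstacles go unaddressed. First, $P$ is semiclassically elliptic at fiber-infinity but $Q$ is not; the paper sidesteps this by noting that all the points involved ($\hat\hyp_E$, $\hat\gl_E$, and the trajectories near them with $\tau=-E$) live in compact subsets of $T^*M$, so one works only with compactly b-microsupported operators. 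Second, the b-elliptic/Dirichlet-form estimate \cite[Lemma 5.3, Eq.\ 5.3]{GaWu:18} must be reproved with the quadratic form $\int h^2|d_XAw|_g^2+V|Aw|^2+hD_tAw\cdot\overline{Aw}\,d\tilde g$, which produces an extra term $\|hD_tAw\|_{L^2}$ on the right; the paper then observes this is controlled by $C\|Aw\|_{L^2}+\mathcal O(h^\infty)\|w\|_{L^2}$ precisely because of compact b-microsupport, so the estimate survives. A similar substitution $h^2\Delta_k\Rightarrow h^2\Delta_k+hD_t$ enters the hyperbolic-point argument of \cite[Lemma 5.10]{GaWu:18}. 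Without identifying these modifications your reduction does not close; you would need to carry out exactly this work to justify citing \cite{GaWu:18} in the $Q$ setting.
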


The (minor) modifications to \cite{GaWu:18} needed to prove Theorem
\ref{theo:diffractiveimprovements} are outlined in Appendix
\ref{appendix}. Note that singularities propagate straight through
$\hat \pi^{-1}(\hat \gl_E)$. Combined with ordinary propagation of
singularities for $Q_\theta$ away from $T^*_{Y_M} M$, we obtain the
following:

\begin{corollary} \label{cor:propagationawayfromhyp}
	Let $w = w(h)$ be $h$-tempered in $H^1_{h,\loc}(M)$ such that $Q_\theta w = \mathcal{O}(h^\infty)_{L^2_\loc}$. Let $s\in \RR$ and $T \geq 0$. If 
 \[
 \mu \in \{q_\theta = 0, \, \tau = -E\}
  \]
 is such that $\exp_{-t\ham_{\Re q_\theta}}(\mu)$ is disjoint from $\hat \pi^{-1}(\hat \hyp_E)$ for each $t \in [0,T]$, then
 \[
 \exp_{-T\ham_{\Re q_\theta}}(\mu) \notin \WF^s(v) \Longrightarrow \mu \notin \WF^s(v).
 \]
\end{corollary}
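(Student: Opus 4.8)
The plan is to deduce the corollary by chaining ordinary semiclassical propagation of singularities for $Q_\theta$, valid away from $T^*_{Y_M}M$, with the diffractive estimate of Theorem~\ref{theo:diffractiveimprovements}, valid near $Y_M$, following the backward trajectory $\gamma(t) = \exp_{-t\ham_{\Re q_\theta}}(\mu)$, $t \in [0,T]$. I would argue by contrapositive: assuming $\mu \in \WF^s(w)$, I would propagate this membership in the direction of increasing $t$ until reaching $\exp_{-T\ham_{\Re q_\theta}}(\mu) \in \WF^s(w)$. One point to settle at the outset is that Theorem~\ref{theo:diffractiveimprovements} is stated for $Q$ rather than $Q_\theta$; but since $Y \subset B(0,R_0) \subset B(0,R_1)$ and $F$ vanishes near $B(0,R_1)$, we have $p_\theta = p$, $q_\theta = q$, and $\Im q_\theta = 0$ on a neighborhood $U$ of $T^*_{Y_M}M$. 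Consequently, whenever $\gamma(t) \in U$, equation~\eqref{eq:flowsagree} identifies the $\ham_{\Re q_\theta}$-flow with the $\ham_q$-flow, and $Q_\theta w = Qw = \mathcal{O}(h^\infty)_{L^2_\loc}$ there, so Theorem~\ref{theo:diffractiveimprovements} applies to $\gamma$ in a neighborhood of $Y_M$.

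Next I would observe that $\gamma$ can only touch $T^*_{Y_M}M$ at glancing points. In normal coordinates, any $\rho \in \{q_\theta = 0,\ \tau = -E\}$ lying over $Y_M$ satisfies $(\xi^1)^2 = r(0,x',\xi',E) \ge 0$, hence $\hat\pi(\rho) \in \hat\hyp_E \cup \hat\gl_E$; since $\gamma(t) \notin \hat\pi^{-1}(\hat\hyp_E)$ by hypothesis, it follows that $\gamma(t) \in \hat\pi^{-1}(\hat\gl_E)$ whenever $\gamma(t) \in T^*_{Y_M}M$. In particular $\gamma$ never reaches a configuration from which a diffractively reflected branch could emanate, which is exactly why the conclusion holds with no loss of derivatives. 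I would then cover the compact segment $\gamma([0,T])$ by finitely many subintervals of two kinds: (i) intervals on which $\gamma$ avoids $T^*_{Y_M}M$, where ordinary semiclassical propagation of singularities for $Q_\theta$ --- a real principal type operator up to the sign-definite term $\Im q_\theta \le 0$, as in \cite[Sections 4.5, 6.2]{DyZw:15} --- propagates $\WF^s(w)$ along $\gamma$; and (ii) short intervals around each time $t_0$ with $\gamma(t_0) \in T^*_{Y_M}M$, which by the above is a glancing point with unique $\hat\pi$-preimage $\gamma(t_0)$, and where Theorem~\ref{theo:diffractiveimprovements}(2) propagates $\gamma(t_0) \in \WF^s(w)$ to $\gamma(t) \in \WF^s(w)$ for $t$ slightly larger than $t_0$. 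Chaining these local statements in order of increasing $t$, starting from $\gamma(0) = \mu$, gives $\exp_{-T\ham_{\Re q_\theta}}(\mu) \in \WF^s(w)$, as desired.

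The step I expect to be the main obstacle is the local analysis behind (ii): one must verify that the closed set of times at which $\gamma$ meets $T^*_{Y_M}M$ is covered by finitely many applications of Theorem~\ref{theo:diffractiveimprovements}(2), and that the ``propagate straight through'' conclusion there glues consistently with the ordinary propagation on the adjacent intervals. For a nondegenerate tangency of $\gamma$ to $Y_M$ this is immediate: then $x^1$ has a simple critical point along $\gamma$, so $\gamma$ meets $Y_M$ only at the single instant $t_0$, flanked on both sides by intervals of type (i). Higher-order tangencies, and glancing rays that remain within $Y_M$ over a whole subinterval of times, require only a routine iteration exploiting the structure of glancing bicharacteristics from \cite{GaWu:18}. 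Compactness of $\gamma([0,T])$ and of $Y$ guarantees that finitely many such steps suffice.
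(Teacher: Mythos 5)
Your local analysis is correct and matches the paper's: away from $T^*_{Y_M}M$ one has ordinary semiclassical propagation for $Q_\theta$ (using $\Im q_\theta\le 0$), and whenever the backward trajectory meets $T^*_{Y_M}M$ inside the characteristic set it does so at a glancing point (the characteristic set over $Y_M$ is $\hat\pi^{-1}(\hat\hyp_E)\cup\hat\pi^{-1}(\hat\gl_E)$ and the hyperbolic part is excluded by hypothesis), where $q_\theta=q$ so that Theorem~\ref{theo:diffractiveimprovements}(2) propagates $\WF^s(w)$ through with no loss. The issue is entirely in how you globalize these two local statements.

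Your ``finite covering'' scheme is where the argument is weak, and you flag this yourself without resolving it. The set $\mathcal{T}=\{t\in[0,T]:\exp_{-t\ham_{\Re q_\theta}}(\mu)\in T^*_{Y_M}M\}$ is compact, but nothing in the hypotheses prevents it from having accumulation points of a nasty type (for instance, a Cantor-like structure); in that case there is no finite partition of $[0,T]$ into intervals of your type~(i) and type~(ii), and the chaining step does not go through as described. ``Routine iteration'' is not a proof here. The standard and correct way to globalize a one-sided local propagation statement --- and the way the paper does it --- is the bootstrap: assume for contradiction that $\mu\in\WF^s(w)$, set
\[
T_0=\sup\bigl\{t\ge 0:\exp_{-t'\ham_{\Re q_\theta}}(\mu)\in\WF^s(w)\text{ for all }t'\in[0,t]\bigr\},
\]
note $\mu'=\exp_{-T_0\ham_{\Re q_\theta}}(\mu)\in\WF^s(w)$ since wavefront set is closed, and observe that if $T_0<T$ then exactly one of your two local results (smooth forward propagation, using $\Im q_\theta\le 0$, or the glancing case of Theorem~\ref{theo:diffractiveimprovements}) applies at $\mu'$ and strictly extends the interval, contradicting the definition of $T_0$. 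This makes the chaining automatic with no structural hypothesis on $\mathcal{T}$. Replace the covering framework by this supremum argument and your proof becomes the paper's.
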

\begin{proof}
Suppose conversely that $\mu \in \WF^{s}(v)$, and define
\[
T_0 = \sup\{ t \geq 0: \exp_{-t'\ham_{\Re q_\theta}}(\mu) \in \WF^s(v) \text{ for all } t'\in[0,t] \}.
\]
Note that $\mu' = \exp_{-T_0\ham_{\Re q_\theta}}(\mu) \in \WF^s(v)$ since wavefront set is closed. We claim that $T_0 \geq T$, which thus completes the proof. Indeed, if $T_0 < T$, then by hypothesis 
\[
\mu' \notin T^*_{Y_M} M \text{ or } \mu' \in \hat \pi^{-1}(\hat \gl_E).
\]
In the first case $Q_\theta$ is smooth in a neighborhood of $\mu'$, and since $\Im q_\theta \leq 0$ we can apply propagation of singularities \emph{forward} along the $\ham_{\Re q_\theta}$ flow to deduce a contradiction. In the second case $q = q_\theta$ in a neighborhood of $\mu'$, so we can apply the second part of Theorem \ref{theo:diffractiveimprovements} to deduce a contradiction.
\end{proof}

We make the following dynamical definitions, recalling the definition of $R_1$ from Section \ref{subsect:complexscaling}.

\begin{defi} \label{defi:IC} Let $E \in \RR$ and $\rho \in \{p=E\} \setminus \pi^{-1}(\hyp_E)$. We say that $\rho \in \IC_E$ if there exists $T_0 \geq 0$ such that $\exp_{-t\ham_{ p}}(\mu)$ is disjoint from $\pi^{-1}(\hyp_E)$ for each $t \in [0,T_0]$ and
	\[
	|x(\exp_{-t\ham_{p}}(\rho))|\geq 2R_1,\text{ for all } t\geq T_0.
	\]
If $\rho \in \{p=E\} \setminus  \pi^{-1}(\hyp_E)$, then we say that $\rho \in \Gamma_{E}^\pm$ if there exists $t > 0$ such that $\exp_{\mp t\ham_{p}}(\rho) \in \pi^{-1}(\hyp_E)$.
\end{defi} 
Thus, $\IC_E$ consists of points that are ``incoming'' from infinity
without interaction with the hyperbolic region in their past (i.e.,
backward-time flow); they may
or may not interact with the hyperbolic region under forward flow,
however.\footnote{Points in $\IC_E$ may also be far from
  the origin with momentum pointing away from the origin, as long as
  they do not hit $\pi^{-1}(\hyp_E)$ under backward flow;
  these points are irrelevant in practice, owing to ellipticity of the
  complex-scaled operator.}  We could equally well have employed the
definition that $\rho \in \IC_E$ if it escapes to infinity in
backward-time without encountering $\pi^{-1}(\hyp_E).$ The
time $T_0$ built into this quantitative version of the definition is used below, however.

The set $\Gamma_{E}^\pm$
represents points hitting the hyperbolic region in
backward-/forward-time (using notation in loose analogy with that of
unstable/stable manifolds). Note that
\begin{equation} \label{eq:scalingregion}
\{ |x| \geq 2R_1 \} \subset \{\Im p_\theta \neq 0\}.
\end{equation}
The next observation follows immediately from Corollary \ref{cor:propagationawayfromhyp}.

\begin{lemma} \label{lem:ICnoWF}
	Let $w = w(h)$ be $h$-tempered in $H^1_{h,\loc}(M)$ such that $Q_\theta w = \mathcal{O}(h^\infty)_{L^2_\loc}$, and let $ E \in \RR$. If $(x,\xi) \in \IC_E$ and we set $\mu = (t,x,-E,\xi) \in \{q_\theta=0, \, \tau = -E\}$, then
\[
\mu \notin \WF^s(w) \text{ for all } s \in \RR.
\]
\end{lemma}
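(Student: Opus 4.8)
The plan is to deduce Lemma~\ref{lem:ICnoWF} directly from Corollary~\ref{cor:propagationawayfromhyp} by exploiting ellipticity of the complex-scaled operator far from the origin. Fix $(x,\xi) \in \IC_E$ with associated time $T_0 \geq 0$ from Definition~\ref{defi:IC}, and set $\mu = (t,x,-E,\xi)$. First I would apply Corollary~\ref{cor:propagationawayfromhyp}: by the defining property of $\IC_E$, the backward trajectory $\exp_{-t\ham_{\Re q_\theta}}(\mu)$ stays disjoint from $\hat\pi^{-1}(\hat\hyp_E)$ for $t \in [0,T_0]$ (here one uses \eqref{eq:flowsagree}, noting that as long as the flow remains in $\{\Im q_\theta = 0\}$ it agrees with the $\ham_q$, equivalently $\ham_p$, flow, so the hypotheses of Definition~\ref{defi:IC} transfer to the $\ham_{\Re q_\theta}$ flow). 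So by the corollary it suffices to show that $\mu_{T_0} := \exp_{-T_0 \ham_{\Re q_\theta}}(\mu) \notin \WF^s(w)$ for every $s$.

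For that, I would run the flow further backward past time $T_0$. By the $\IC_E$ condition, $|x(\exp_{-t\ham_p}(\rho))| \geq 2R_1$ for all $t \geq T_0$, where $\rho = (x,\xi)$; again using \eqref{eq:flowsagree} this persists for the $\ham_{\Re q_\theta}$ flow (the spatial projection is unaffected by the $\tau$-component, and the flow agrees with the undeformed one wherever $\Im q_\theta = 0$; once one checks the trajectory never leaves $\{\Im q_\theta = 0\}$ — which follows because in $\{|x| \ge 2R_1\}$ one could only have $\Im q_\theta = 0$ on a set where the deformed and undeformed flows still coincide, or more simply because the undeformed flow escapes to infinity and stays there). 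Hence for all $t \geq T_0$ the point $\exp_{-t\ham_{\Re q_\theta}}(\mu)$ lies over $\{|x| \geq 2R_1\}$, where by \eqref{eq:scalingregion} we have $\Im p_\theta \neq 0$, hence $\Im q_\theta \neq 0$. In fact on this region $Q_\theta$ is (semiclassically) elliptic in the relevant sense — $\Im q_\theta$ has a definite sign and is nonvanishing on $\{q_\theta = 0, \tau = -E\}$ — so by the elliptic estimate combined with a standard normally-convergent Neumann-type / propagation argument, the non-semiclassical and semiclassical wavefront sets of $w$ are empty there. Concretely: since $Qw = \mathcal O(h^\infty)$ hence $Q_\theta w = \mathcal O(h^\infty)$, and $\mu_{T_0}$ can be connected within $\{\Im q_\theta \neq 0\}$ to the elliptic set of $Q_\theta$, propagation of singularities forward along $\ham_{\Re q_\theta}$ from a point where $w$ has no wavefront set (by ellipticity) shows $\mu_{T_0} \notin \WF^s(w)$.

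More carefully, the cleanest route avoids quoting a separate elliptic statement: apply Corollary~\ref{cor:propagationawayfromhyp} itself with $T$ replaced by a large parameter $T' > T_0$ and $\mu$ replaced by $\mu$. The backward trajectory from $\mu$ over $[0,T']$ is disjoint from $\hat\pi^{-1}(\hat\hyp_E)$: on $[0,T_0]$ by the first $\IC_E$ condition, and on $[T_0,T']$ because it lies over $\{|x|\ge 2R_1\}$ which is disjoint from $T^*_{Y_M}M$ entirely (as $Y$, hence $Y_M$, lies in $B(0,R_0) \subset B(0,2R_1)$). So the corollary reduces the claim to showing $\exp_{-T'\ham_{\Re q_\theta}}(\mu) \notin \WF^s(w)$. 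But this last point sits deep in the region $\{|x| \geq 2R_1\} \subset \{\Im q_\theta \neq 0\}$, where $Q_\theta$ is elliptic on $\{q_\theta = 0,\ \tau = -E\}$; by the semiclassical elliptic estimate for $Q_\theta$, $w$ has no semiclassical wavefront set of any order at such a point. This closes the argument.

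The main obstacle I anticipate is bookkeeping around \eqref{eq:flowsagree}: one must verify that the $\ham_{\Re q_\theta}$ flow starting from $\mu$ genuinely stays in $\{\Im q_\theta = 0\}$ long enough (at least on $[0,T_0]$, and stays over $\{|x|\ge 2R_1\}$ thereafter) so that the dynamical hypotheses phrased for the $\ham_p$ flow in Definition~\ref{defi:IC} can legitimately be transferred to the deformed flow — and, symmetrically, that once the trajectory enters $\{|x| \ge 2R_1\}$ it cannot re-enter the region where $Q_\theta$ fails to be elliptic. This is exactly the content of the proof of \cite[Proposition 6.10]{DyZw:15} and is essentially immediate from convexity of $F$ together with non-trapping, but it is the one point requiring care; the ellipticity and propagation inputs are otherwise entirely standard.
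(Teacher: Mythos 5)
Your overall strategy matches the paper's: reduce via Corollary~\ref{cor:propagationawayfromhyp} to a point far from $Y$, then invoke semiclassical ellipticity of $Q_\theta$ in $\{\Im q_\theta \neq 0\}$. But the execution has a genuine gap in the step where you transfer the $\IC_E$ condition to the $\ham_{\Re q_\theta}$ flow. The hypothesis of Corollary~\ref{cor:propagationawayfromhyp} concerns the backward $\ham_{\Re q_\theta}$ trajectory of $\mu$, whereas Definition~\ref{defi:IC} controls the backward $\ham_p$ (equivalently $\ham_q$) trajectory. By \eqref{eq:flowsagree} these coincide \emph{only so long as the $\Re q_\theta$ trajectory stays in $\{\Im q_\theta = 0\}$}. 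The paper's first move is to observe that this cannot persist over all of $[0,T_0]$: if it did, the flows would agree up to $T_0$, and $\exp_{-T_0\ham_{q}}(\mu)$ would lie over $\{|x|\geq 2R_1\}\subset\{\Im q_\theta \neq 0\}$ by \eqref{eq:scalingregion} --- a contradiction. So there is a first exit time $T<T_0$, and past $T$ the two flows genuinely diverge. Your assertion that the $\ham_{\Re q_\theta}$ trajectory ``stays disjoint from $\hat\pi^{-1}(\hat\hyp_E)$ for $t \in [0,T_0]$'' (and, in the second version, for $t\in[0,T']$ with $T'>T_0$ ``because it lies over $\{|x|\ge 2R_1\}$'') implicitly uses $\IC_E$ to locate the $\ham_{\Re q_\theta}$ trajectory past time $T$, where no correspondence with the $\ham_p$ flow is available. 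Once the trajectory enters the transition region $R_1 < |x| < 2R_1$ (where the deformation is nontrivial and $\ham_{\Re p_\theta}$ is not free Euclidean motion), nothing a priori prevents it from re-entering $\{\Im q_\theta = 0\}$ and approaching $T^*_{Y_M}M$; the reference to convexity of $F$ and \cite[Proposition 6.10]{DyZw:15} does not supply this, and you correctly identify it as ``the one point requiring care'' but do not actually resolve it.

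The fix is to stop at the exit time rather than trying to track the deformed flow indefinitely. Let $T = \inf\{t\in[0,T_0]: \exp_{-t\ham_{\Re q_\theta}}(\mu) \in \{\Im q_\theta \neq 0\}\}$, which exists by the contradiction argument above. On $[0,T]$ the flows agree, so this segment is disjoint from $\hat\pi^{-1}(\hat\hyp_E)$ (using $T \leq T_0$ and $\IC_E$). The endpoint $\mu' = \exp_{-T\ham_{\Re q_\theta}}(\mu)$ lies in the closure of $\{\Im q_\theta\neq 0\}$, hence off $T^*_{Y_M}M$ (since $\Im q_\theta = 0$ on a full neighborhood of $T^*_{Y_M}M$). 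Take $\mu'' = \exp_{-(T+\delta)\ham_{\Re q_\theta}}(\mu)$ for small $\delta>0$ so that $\Im q_\theta(\mu'')\neq 0$; ellipticity gives $\mu''\notin\WF(w)$, and forward propagation in the smooth region (legitimate since $\Im q_\theta\leq 0$) gives $\mu'\notin\WF(w)$. Then Corollary~\ref{cor:propagationawayfromhyp} applied on $[0,T]$ yields $\mu\notin\WF^s(w)$. This is exactly the paper's argument; your version needs the explicit definition of $T$ and the restriction of Corollary~\ref{cor:propagationawayfromhyp} to $[0,T]$ rather than $[0,T_0]$ or $[0,T']$.
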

\begin{proof}
Let $T_0 >0$ be as in Definition \ref{defi:IC}.  It cannot be that $\exp_{-t\ham_{\Re q_\theta}}(\mu) \in \{\Im q_\theta = 0\}$ for all $t \in [0,T_0]$, since using $\Im q_\theta(\tau,x,\xi) = \Im p_\theta(x,\xi)$ and \eqref{eq:flowsagree}, we would conclude that
\begin{equation} \label{eq:qflowsagree}
\exp_{-t\ham_{\Re q_\theta}}(\mu) = \exp_{-t\ham_{q}}(\mu) \text{ for all } t \in [0,T_0],
\end{equation}
and in particular $\exp_{-T_0\ham_{q}}(\mu) \in \{ \Im q_\theta = 0\}$; this contradicts the definition of $T_0$ according to \eqref{eq:scalingregion}.
Thus we can define
\[
T = \inf\{  t \in [0,T_0]: \exp_{-T\ham_{\Re q_\theta}}(\mu) \in \{ \Im q_\theta \neq 0 \} \}.
\]
Note that $\mu'= \exp_{-T\ham_{\Re q_\theta}}(\mu) \notin T^*_{Y_M} M$ since $\Im q_\theta = 0$ in a neighborhood of $T^*_{Y_M} Y$. By semiclassical ellipticity in the smooth setting and the definition of $T$, there exists $\delta>0$ such that 
\[
\mu''= \exp_{-(T+\delta)\ham_{\Re q_\theta}}(\mu) \notin \WF(w),
\]
and hence by forward propagation  in the smooth setting we have $\mu' \notin \WF(w)$ as well.

On the other hand, by the definition of $T$ we must have that \eqref{eq:qflowsagree} holds for all $t \in [0,T]$. Since $(x,\xi) \in \IC_E$, it follows that the backward $\ham_{\Re q_\theta}$ flow from $\mu$ to $\mu'$ is disjoint from $\hat \pi^{-1}(\hat \hyp_E)$, and hence $\mu \notin \WF^s(w)$ for all $s\in \RR$ by Corollary \ref{cor:propagationawayfromhyp}.
\end{proof}

Now we make the assumption that $E$ is a non-trapping energy level.

\begin{lemma} \label{lem:Gammapm}
		Let $w = w(h)$ be $h$-tempered in $H^1_{h,\loc}(M)$ such that $Q_{\theta}w = \mathcal{O}(h^\infty)_{L^2_\loc}$, and let $E >0$ be non-trapping. If there exists 
	\[
	\mu \in \WF^s(w) \cap \{q_\theta = 0, \tau = -E\}
	\]
	for some $s \in \RR$, then there exists $r \in \RR$ and $\mu' = (t,x,-E,\xi) \in \WF^r(w) \cap \{q_\theta = 0\}$ such that 
	\[
	(x,\xi) \in \Gamma_{E}^-.
	\]
\end{lemma}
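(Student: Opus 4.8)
The plan is to propagate the wavefront set of $w$ \emph{backward} along the $\ham_{\Re q_\theta}$ flow, starting from the given point $\mu\in\WF^s(w)\cap\{q_\theta=0,\tau=-E\}$, until the orbit first hits the hyperbolic set $\hat\pi^{-1}(\hat\hyp_E)$, and then to perform a single diffractive step there via Theorem~\ref{theo:diffractiveimprovements}(1). Throughout I would use the standard structure of the complex scaling (see \cite[Section 6.2]{DyZw:15}): the wavefront set of $w$ at energy $\tau=-E$ lies in the region $\{F=0\}$, where $q_\theta=q$ and the spatial part of the $\ham_{\Re q_\theta}$ flow is the $\ham_p$ flow (this uses $E>0$ and that $\Im q_\theta<0$ on $\{\Re q_\theta=0,\tau=-E\}$ away from $\{F=0\}$, together with semiclassical ellipticity of $Q_\theta$, which forces $\WF(w)\subset\{q_\theta=0\}$). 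In particular the spatial part $(x_0,\xi_0)$ of $\mu$ lies in $\{p=E\}$, and any segment of a backward $\ham_{\Re q_\theta}$ orbit of $\mu$ that lies in $\WF(w)$ stays in $\{F=0\}$ and hence agrees, spatially, with the $\ham_p$ flow.

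First I would show the backward orbit of $\mu$ must meet $\hat\pi^{-1}(\hat\hyp_E)$. If it did not, Corollary~\ref{cor:propagationawayfromhyp} (with $v=w$, applied for every $T\ge 0$) would force the entire backward orbit of $\mu$ into $\WF^s(w)$; by the confinement above this orbit stays in $\{F=0\}$ and therefore coincides with the backward $\ham_p$ orbit of $(x_0,\xi_0)$, which would then be bounded — contradicting non-trapping of $E$ (equivalently, one checks $(x_0,\xi_0)\in\IC_E$ and contradicts Lemma~\ref{lem:ICnoWF}). So let $\tau_1\ge 0$ be the smallest time with $\nu:=\exp_{-\tau_1\ham_{\Re q_\theta}}(\mu)\in\hat\pi^{-1}(\hat\hyp_E)$. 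Applying Corollary~\ref{cor:propagationawayfromhyp} on $[0,\tau_1-\varepsilon]$ for each $\varepsilon>0$ and using that $\WF^s(w)$ is closed gives $\nu\in\WF^s(w)$; confinement again shows the orbit from $\mu$ to $\nu$ stays where $q_\theta=q$, so $\nu$ has spatial part $\exp_{-\tau_1\ham_p}(x_0,\xi_0)\in\pi^{-1}(\hyp_E)$.

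Next I would run one diffractive step at $\nu$. Let $\hat q=\hat\pi(\nu)\in\hat\hyp_E$ and let $\mu_\pm\in\{q=0,\tau=-E\}$ be its two preimages under $\hat\pi$ with opposite normal momenta, $\nu$ being one of them, say $\nu=\mu_+$. Since $Q_\theta=Q$ near $Y$, Theorem~\ref{theo:diffractiveimprovements}(1) gives $\varepsilon>0$ such that for $t\in(0,\varepsilon)$ either $\exp_{-t\ham_q}(\mu_+)\in\WF^s(w)$ or $\exp_{-t\ham_q}(\mu_-)\in\WF^{s-\alpha}(w)$. Fix a small $t_0\in(0,\varepsilon)$ and set $\mu'=\exp_{-t_0\ham_q}(\mu_+)$, $r=s$ in the first case, and $\mu'=\exp_{-t_0\ham_q}(\mu_-)$, $r=s-\alpha$ in the second. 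Then $\mu'\in\WF^r(w)$; its $\tau$-component is $-E$ (conserved by $\ham_q$), and for $t_0$ small $\mu'$ stays near $Y$ where $q_\theta=q$, so $\mu'\in\{q_\theta=0\}$ and we may write $\mu'=(t,x,-E,\xi)$. Finally $(x,\xi)=\exp_{-t_0\ham_p}(\rho_\pm)$, where $\rho_\pm\in\pi^{-1}(\hyp_E)$ is the spatial part of $\mu_\pm$; because the $\ham_p$ flow is transverse to $Y$ over $\hyp_E$, for small $t_0$ the point $(x,\xi)$ lies off $Y$, hence in $\{p=E\}\setminus\pi^{-1}(\hyp_E)$, and $\exp_{t_0\ham_p}(x,\xi)=\rho_\pm\in\pi^{-1}(\hyp_E)$ shows $(x,\xi)\in\Gamma_E^-$, as required.

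The main obstacle is not a single estimate but the bookkeeping that makes the \emph{classical} non-trapping hypothesis (a statement about $\ham_p$) interact correctly with the \emph{complex-scaled} propagation (which runs along $\ham_{\Re q_\theta}$): one must confine the relevant part of $\WF(w)$ to $\{F=0\}$, where the two flows agree spatially, and rule out the backward orbit sneaking out into and back from the scaling region while staying in $\WF(w)$. Once that is in place, the rest is a direct combination of Corollary~\ref{cor:propagationawayfromhyp} and Theorem~\ref{theo:diffractiveimprovements}; the possible loss of $\alpha$ derivatives in the diffractive step is exactly why the conclusion asserts only the existence of \emph{some} $r\in\RR$ rather than $r=s$.
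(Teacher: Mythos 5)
Your proposal is correct and follows essentially the same route as the paper: flow $\mu$ backward along $\ham_{\Re q_\theta}$ until the orbit first meets $\hat\pi^{-1}(\hat\hyp_E)$ (ruling out the alternative via non-trapping, Lemma~\ref{lem:ICnoWF}, and ellipticity of $Q_\theta$ in the scaling region), then apply one diffractive step from Theorem~\ref{theo:diffractiveimprovements}(1). The paper compresses the confinement-to-$\{F=0\}$ bookkeeping into the phrase ``arguing precisely as in Lemma~\ref{lem:ICnoWF}''; you have simply unpacked that step explicitly.
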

\begin{proof}
First assume $\mu \notin \hat \pi^{-1}(\hat \hyp_E)$. Arguing precisely as in Lemma \ref{lem:ICnoWF}, the backwards flow $\exp_{-t\ham_{\Re q_{\theta}}}(\mu)$ must encounter a hyperbolic point $\mu_+$ for some $t > 0$. Otherwise,  if 
\[
\mu \in \hat \pi^{-1}(\hat \hyp_E)
\] 
to begin with, simply set $\mu_+ = \mu$. Now let $\mu_-$ project to the same hyperbolic point as $\mu_+$, but with opposite normal momentum. We know from the first part of Theorem \ref{theo:diffractiveimprovements} that there exists $\mu'$ with the requisite properties, obtained by flowing backwards along $\ham_{q}$ from either $\mu_+$ (taking $r = s$) or $\mu_-$ (taking $r =s - \alpha$) for a short time $\varepsilon > 0$, noting that $q =\Re q_{\theta} = q_\theta$ near $\mu_\pm$.
\end{proof}

Note in Lemma \ref{lem:Gammapm} we \emph{assume} that $\mu$ is in both
$\WF^s(w)$ and the characteristic set $\{ q_\theta= 0\}$, since the
inclusion of former in the latter is only guaranteed for a certain
range of $s$ owing to the singularity of $V$ (see \cite[Proposition 7.5]{GaWu:18}).

\begin{lemma} \label{lem:entersIC}
	Let $E > 0$ be non-trapping and $\rho \in \Gamma_{E}^-$. If $\diam_E(Y) > 0$ and $T > \diam_E(Y)$, then
	\[
	\exp_{-T\ham_{p}}(\rho) \in \IC_E.
	\]
	If $\diam_E(Y) = 0$, then $\rho \in \IC_E$.
\end{lemma}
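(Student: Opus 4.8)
The plan is to verify the two defining properties in Definition~\ref{defi:IC} directly. Set $\rho' = \exp_{-T\ham_p}(\rho)$ when $\diam_E(Y) > 0$, and $\rho' = \rho$ when $\diam_E(Y) = 0$. Since $\ham_p$ is tangent to the energy surfaces and $\rho \in \Gamma_{E}^- \subset \{p=E\}$, in either case $\rho'$ lies on an $\ham_p$-integral curve contained in $\{p = E\}$, and in particular $\rho' \in \{p=E\}$.

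First I would show that $\exp_{-t\ham_p}(\rho')$ is disjoint from $\pi^{-1}(\hyp_E)$ for every $t \geq 0$. Suppose not: then $\exp_{-t\ham_p}(\rho') \in \pi^{-1}(\hyp_E)$ for some $t \geq 0$, while by the definition of $\Gamma_{E}^-$ there is also $t_1 > 0$ with $\exp_{t_1\ham_p}(\rho) \in \pi^{-1}(\hyp_E)$. Flowing from the former hyperbolic point to the latter takes affine time $t + T + t_1$ (respectively $t + t_1$), which is strictly positive, so it is at most $\diam_E(Y)$ by \eqref{eq:diameter}. When $\diam_E(Y) > 0$ this gives $t + T + t_1 \leq \diam_E(Y) < T$, i.e. $t + t_1 < 0$, a contradiction; when $\diam_E(Y) = 0$ it gives $t + t_1 \leq 0$, again a contradiction. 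Taking $t = 0$ also shows $\rho' \notin \pi^{-1}(\hyp_E)$, so $\rho' \in \{p=E\}\setminus\pi^{-1}(\hyp_E)$ as required by Definition~\ref{defi:IC}.

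Next I would produce the escape time: because $E$ is non-trapping and $\rho'$ lies on an integral curve in $\{p=E\}$, we have $|x(\exp_{-t\ham_p}(\rho'))| \to \infty$ as $t \to +\infty$, so there is $T_0 \geq 0$ with $|x(\exp_{-t\ham_p}(\rho'))| \geq 2R_1$ for all $t \geq T_0$. Combined with the first step, $\exp_{-t\ham_p}(\rho')$ is disjoint from $\pi^{-1}(\hyp_E)$ on $[0,T_0]$ (indeed on all of $[0,\infty)$) and lies in $\{|x| \geq 2R_1\}$ for every $t \geq T_0$, which is exactly the assertion $\rho' \in \IC_E$. There is no serious obstacle here; the argument is short dynamical bookkeeping, and the only point requiring care is that the single constant $T_0$ in Definition~\ref{defi:IC} must serve both requirements simultaneously — which works precisely because the disjointness from $\pi^{-1}(\hyp_E)$ established in the first step holds for the entire backward trajectory rather than merely up to $T_0$. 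One should also keep the sign conventions in the definitions of $\Gamma_{E}^\pm$ and $\diam_E(Y)$ straight when running the contradiction.
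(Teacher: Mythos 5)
Your proposal is correct and uses essentially the same dynamical bookkeeping as the paper: reduce everything to the definition of $\diam_E(Y)$ via the time elapsed between the two hyperbolic encounters, and then invoke non-trapping for the escape. The one small difference is cosmetic but favors you: the paper's proof sets $\rho' = \exp_{-T\ham_p}(\rho_0)$ with $\rho_0 = \exp_{t_0\ham_p}(\rho)$ a hyperbolic point (so the time count $T+t$ is measured from $\rho_0$), which shows $\IC_E$-membership of a point one must then flow backward by $t_0$ more to reach the $\exp_{-T\ham_p}(\rho)$ actually named in the statement; you instead work with $\exp_{-T\ham_p}(\rho)$ directly, so the elapsed time is $t+T+t_1$ and the contradiction $t+t_1 < 0$ lands on exactly the claimed point with no extra step.
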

\begin{proof}
	Since $\rho \in \Gamma_{E}^-$, there exists $t_0 > 0$ such that $\rho_0 = \exp_{t_0 \ham_p}(\rho) \in \pi^{-1}(\hyp_E)$.  Clearly if $\diam_E(Y) = 0$, then $\rho \in \IC_E$. Otherwise, if $0 < \diam_E(Y) < T$, set 
	\[
	\rho' = \exp_{-T\ham_{p}}(\rho_0) =  \exp_{(t_0-T)\ham_{p}}(\rho).
	\]
	Then $\exp_{-t\ham_{p}}(\rho')$ is disjoint from $\pi^{-1}(\hyp_E)$ for all $t \geq 0$, and since $E$ is non-trapping the proof is finished.
\end{proof} 

We record a key lemma, which is a refinement of Lemma \ref{lem:Gammapm}.

\begin{lemma} \label{lem:mustdiffract}
Let $w = w(h)$ be $h$-tempered in $H^1_{h,\loc}(M)$ such that $Q_{\theta}w = \mathcal{O}(h^\infty)_{L^2_\loc}$, and let $E > 0$ be nontrapping. If $(x,\xi) \in \Gamma_{E}^-$
and 
\[
\mu  = (0,x,-E,\xi) \in \WF^s(w) \cap \{q_\theta = 0\}
\]
for some $s \in \RR$,
then for all $\delta>0$ there exists $T_0 \leq \diam_E(Y) + \delta$ and 
\[
\mu' = (-T_0,x_0,-E,\xi_0) \in \WF^{s-\alpha}(w) \cap \{q_\theta = 0\}
\] 
such that $(x_0,\xi_0) \in \Gamma_{E}^-$.

\end{lemma}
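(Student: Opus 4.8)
Here is the approach I would take.

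\medskip

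The plan is to track membership in $\WF^s(w)$ along the backward complex‑scaled flow of $\mu$ and to show that it is forced onto the weaker (``diffracted'') branch of Theorem~\ref{theo:diffractiveimprovements}(1) at a hyperbolic point reached in affine time less than $\diam_E(Y)$. I would write $\gamma(t)=\exp_{-t\ham_{\Re q_\theta}}(\mu)$ and set $S=\{t\ge 0:\gamma(t)\in\WF^s(w)\}$, a closed set containing $0$. If $t\in S$ then $\Im q_\theta(\gamma(t))=0$, since otherwise $q_\theta(\gamma(t))=i\Im q_\theta(\gamma(t))\ne 0$ would make $Q_\theta$ semiclassically elliptic at $\gamma(t)$ and force $\gamma(t)\notin\WF(w)$. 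Hence on any initial interval $[0,T]\subseteq S$ the flow stays in $\{\Im q_\theta=0\}$, so by \eqref{eq:flowsagree} (with $q$ in place of $p$) it agrees there with $\exp_{-t\ham_q}(\mu)$, whose spatial projection is $\exp_{-t\ham_p}(x,\xi)$ and whose $\tau$‑coordinate is $-E$; in particular $\gamma(t)\in\{q_\theta=0,\ \tau=-E\}$ for $t\in S$. Put $T_*=\sup\{T\ge 0:[0,T]\subseteq S\}$. Then $T_*<\infty$: were it infinite, $\gamma(t)\in\{\Im q_\theta=0\}\subseteq\{|x|<2R_1\}$ for all $t$ by \eqref{eq:scalingregion}, so the backward $\ham_p$‑orbit of $(x,\xi)$ would stay in a compact set, contradicting that $E$ is non‑trapping.

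By closedness $\mu_*:=\gamma(T_*)\in\WF^s(w)$, and I would distinguish whether $\mu_*\in\hat\pi^{-1}(\hat\hyp_E)$. If not, then $\mu_*$ lies either in the region where $Q_\theta$ is smooth or in the glancing set, and in either case $\exp_{-t\ham_{\Re q_\theta}}(\mu_*)$ stays off $\hat\pi^{-1}(\hat\hyp_E)$ for $t\in[0,\eta]$ and some small $\eta>0$; Corollary~\ref{cor:propagationawayfromhyp} then gives $\gamma(T_*+t)\in\WF^s(w)$ for $t\in[0,\eta]$, so $[0,T_*+\eta]\subseteq S$, contradicting the maximality of $T_*$. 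Hence $\mu_*\in\hat\pi^{-1}(\hat\hyp_E)$; since $\mu_*\in\{q_\theta=0,\ \tau=-E\}$ and $Q=Q_\theta$, $q=q_\theta$ near $Y$, $\mu_*$ is one of the two preimages in $\{q=0,\ \tau=-E\}$ of the hyperbolic point $\hat\pi(\mu_*)$. Applying Theorem~\ref{theo:diffractiveimprovements}(1) with $\mu_+=\mu_*$ and $\mu_-$ the other preimage, either $\exp_{-t\ham_q}(\mu_+)=\gamma(T_*+t)\subseteq\WF^s(w)$ for $t\in(0,\varepsilon)$ --- which again yields $[0,T_*+\varepsilon/2]\subseteq S$ and contradicts maximality --- or else $\exp_{-t\ham_q}(\mu_-)\subseteq\WF^{s-\alpha}(w)$ for $t\in(0,\varepsilon)$; thus the latter holds.

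To conclude I would fix $t\in(0,\min(\varepsilon,\delta))$, set $T_0=T_*+t$ and $\mu'=\exp_{-t\ham_q}(\mu_-)$. Then $\mu'\in\WF^{s-\alpha}(w)$, and $\mu'\in\{q_\theta=0\}$ since $\mu_-\in\{q=0,\ \tau=-E\}$, the $\ham_q$‑flow preserves $\{q=0\}$, and $q=q_\theta$ near $Y$ for small $t$. The time coordinate of $\mu'$ is $-T_0$, and its spatial projection $(x_0,\xi_0)$ satisfies $\exp_{t\ham_p}(x_0,\xi_0)\in\pi^{-1}(\hyp_E)$; because $\mu_*$ is a transversal crossing of $Y$ one has $x_0\notin Y$, so $(x_0,\xi_0)\notin\pi^{-1}(\hyp_E)$ and hence $(x_0,\xi_0)\in\Gamma_E^-$. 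For the time bound, the spatial projection of $\mu_*$ is $\exp_{-T_*\ham_p}(x,\xi)\in\pi^{-1}(\hyp_E)$, while $(x,\xi)\in\Gamma_E^-$ provides $t_*>0$ with $\exp_{t_*\ham_p}(x,\xi)\in\pi^{-1}(\hyp_E)$; the $\ham_p$‑trajectory of length $T_*+t_*$ joins two points of $\pi^{-1}(\hyp_E)$, so $T_*+t_*\le\diam_E(Y)$ and therefore $T_0=T_*+t<\diam_E(Y)+\delta$. (If $\diam_E(Y)=0$ this last inequality is impossible, so the hypotheses are vacuous in that case.)

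The main obstacle I anticipate is the bookkeeping in the second step: verifying that the complex‑scaled flow leaves $\hat\pi^{-1}(\hat\hyp_E)$ near glancing points (so that Corollary~\ref{cor:propagationawayfromhyp} genuinely applies to extend $S$ past $T_*$), and carefully matching the $\Re q_\theta$‑flow with $\ham_q$ and $\ham_p$ along the way. By contrast, the conceptual core --- that not diffracting would keep $\gamma$ regular all the way to infinity, contradicting non‑trapping --- is encapsulated cleanly in the maximality argument for $T_*$.
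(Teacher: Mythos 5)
Your proof is correct and follows essentially the same route as the paper: follow the backward $\ham_{\Re q_\theta}$-flow from $\mu$, show it stays in $\WF^s(w)$ and agrees with the $\ham_q$-flow until it is forced to hit a hyperbolic point $\mu_*=\mu_+$ (else propagation/ellipticity or the glancing case of Theorem~\ref{theo:diffractiveimprovements} would let the wavefront set continue, contradicting maximality), and then invoke Theorem~\ref{theo:diffractiveimprovements}(1) to pick up the diffracted branch. The only (cosmetic) difference is how the time bound is obtained: you first show $T_*<\infty$ directly from non-trapping and then bound $T_*+t_*$ by $\diam_E(Y)$ straight from the definition once $\mu_*$ is identified as hyperbolic, whereas the paper obtains $t_0\le\diam_E(Y)$ up front by combining Lemmas~\ref{lem:ICnoWF} and~\ref{lem:entersIC}; both arguments are valid and rely on the same mechanism.
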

\begin{proof}
 Define
    \[
  t_0 \equiv \sup\big\{t\geq 0 \colon \exp_{-t' \ham_{\Re q_\theta}} (\mu) \in \WF^s
  (w) \text{ for all } t' \in [0,t] \big\}.
  \]
 We claim that $t_0$ always exists and $t_0 \leq \diam_E(Y)$. To see this, first note that for $t\geq 0$,
  \begin{align*}
  \exp_{-t' \ham_{\Re q_\theta}} (\mu) \in \WF^s
  (w) &\text{ for all }t' \in [0,t] \\ &\Longrightarrow \exp_{-t'\ham_{\Re q_\theta}}(\mu) = \exp_{-t'\ham_{q}}(\mu) \text{ for all }t' \in [0,t].
  \end{align*}
Indeed, if the conclusion fails, then by \eqref{eq:qflowsagree} there must be $t' \in [0,t]$ such that $\exp_{-t'\ham_{\Re q_\theta}}(\mu) \in \Im \{q_\theta \neq 0\}$. But in the neighborhood of such a point $q_\theta$ is smooth, and hence by semiclassical ellipticity $\exp_{-t'\ham_{\Re q_\theta}}(\mu) \notin \WF^s(w)$, which is a contradiction. Furthermore, by Lemmas \ref{lem:ICnoWF} and \ref{lem:entersIC}, if $T > \diam_E(Y)$, then 
  \[
  \exp_{-T\ham_{q}}(\mu) \notin \WF^s(w),
  \]
  which shows that $t < T$.

Now set $\mu_+=\exp_{-t_0 \ham_{\Re q_\theta}} (\mu);$ we have $\mu_+ \in
\WF^s(w)$ since wavefront set is closed.  Also by definition of $t_0$, there is a sequence $\varepsilon_n > 0$ with $\varepsilon_n \rightarrow 0$ such that 
\[
\exp_{-\varepsilon_n \ham_{\Re q_\theta}} (\mu_+) \notin \WF^s (w),
\]
By ordinary propagation of singularities of $Q_\theta$ away from $T^*_{Y_M} Y$ we must have $\mu_+ \in T^*_{Y_M}M$, and by Theorem~\ref{theo:diffractiveimprovements},  $\hat \pi(\mu_+)$ must be a hyperbolic point. Let $\mu_-$
denote the point in $\hat\pi^{-1}(\hat \pi(\mu_+))$ with opposite
normal momentum.  Then Theorem~\ref{theo:diffractiveimprovements} shows
that
\[
\mu' \equiv \exp_{-\delta \ham_{\Re q_\theta}}(\mu_{-}) \in \WF^{s-\alpha}(w)
\]
for $\delta>0$ arbitrarily small. Now $T_0 = -t(\mu') = t_0 + \delta \leq \diam_E(Y) + \delta$ as desired.
\end{proof}

We are now ready to prove Theorem \ref{theo:resonancewidth}.

\subsection{Resonance widths}

We begin by working quite generally, without prejudice as to the operator $P$. Suppose that $u = u(h)$ is $h$-tempered in $H^1_{h,\loc}(X)$. Given a family $z= z(h) \in \CC$, set
\[
\nu(h) = \frac{\Im z(h)}{h\log(h)}.
\]
Now form the functions
\[
w(t,x) = e^{-izt/h}u(x),
\]
and note that a sufficient condition to guarantee that $w = w(h)$ is $h$-tempered in $H^1_{h,\loc}(M)$ is $\nu(h) = \mathcal{O}(1)$. Furthermore, if $(P-z)u = \mathcal{O}(h^\infty)_{L^2_\loc}$, then 
\[
Qw = \mathcal{O}(h^\infty)_{L^2_\loc}.
\]
We need a simple lemma comparing the wavefront of $w$ with its restriction to a fixed time slice $\{t = t_0\}$. For our purposes it will suffice to consider wavefront set away from fiber-infinity. In addition, to the assumption $\nu(h) = \mathcal{O}(1)$, we also assume that $z(h) = E + o(1)$ for some $E \in \RR$.  Note that 
\[
\WF^s(w) \cap T^*M \subset \{\tau = -E\}
\]
for all $s$, since $(hD_t + z)w =0$ and $hD_t + z$ is elliptic when $\tau \neq -E$. Of course this leaves open the possibility that $w$ has wavefront set at fiber-infinity in a direction with $\tau=0$. We need to be slightly more precise: let $\chi_0 \in \CcI(\RR)$ have support near $-E$ and $\chi_1 \in \CcI(\RR)$ have support near $t_0$. Then for each $K \subset X$ compact,
\begin{equation} \label{eq:taulocalization}
\| (1- \chi_0(hD_t))\chi_1(t)w\|_{L^2(\RR \times K)} = \mathcal{O}(h^\infty)
\end{equation}
after integrating by parts.

For each fixed $t_0\in\RR$, introduce the set
\[
W^s(t_0) = \{(x,\xi) \in T^*X : (t_0,x,-E,\xi)  \in \WF^{s}(w)\}.
\]
We have the following lifting lemma:
\begin{lemma} \label{lem:lifting}
	If $z = z(h) \in \CC $ satisfies $z(h) = E + o(1)$ and $|\nu(h)| \leq \nu_0$ for some $E \in R$ and $\nu_0 >0$, then for each $s,t_0 \in \RR$ and $\delta> 0$
	\[
	W^{s-\delta}(t_0) \subset \WF^{s}(h^{\nu(h) t_0}u) \cap T^*X \subset W^s(t_0).
	\]
\end{lemma}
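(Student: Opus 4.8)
The plan is to exploit the relation $w(t,x) = e^{-izt/h}u(x)$ together with the $\tau$-localization \eqref{eq:taulocalization} to convert wavefront set statements about $w$ on the time slice $\{t = t_0\}$ into wavefront set statements about the scaled function $h^{\nu(h)t_0}u$. The key point is that, modulo $\mathcal{O}(h^\infty)$ errors, multiplication by $e^{-izt/h}$ on the relevant spectral region $\{\tau \approx -E\}$ acts like multiplication by the scalar $e^{-izt_0/h} = e^{-i(\Re z)t_0/h}\, h^{\nu(h)t_0}$ (using $\Im z = \nu(h) h \log h$, so $e^{(\Im z) t_0/h} = h^{\nu(h)t_0}$), up to a unimodular factor $e^{-i(\Re z)t_0/h}$ that does not affect wavefront set and a relative error controlled by $z(h) = E + o(1)$ and $|\nu(h)|\le \nu_0$.

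More precisely, first I would fix $t_0$ and choose cutoffs $\chi_0 \in \CcI(\RR)$ supported near $-E$ and $\chi_1 \in \CcI(\RR)$ supported near $t_0$, with $\chi_1 \equiv 1$ near $t_0$. By \eqref{eq:taulocalization}, $w$ agrees with $\chi_0(hD_t)\chi_1(t)w$ up to $\mathcal{O}(h^\infty)_{L^2_\loc}$ near $\{t=t_0\}$, so I may replace $w$ by this localized version when computing $W^s(t_0)$. Next, I would test against a semiclassical pseudodifferential operator $A \in \Psi_h^0(X)$ localizing near a point $(x_0,\xi_0)$ and observe that $A$ acting in the $x$ variables commutes with $\chi_0(hD_t)$; evaluating the resulting expression on the slice $t = t_0$ and using that $\chi_0(hD_t)$ effectively restricts $\tau$ to a neighborhood of $-E$ where $e^{-izt/h}$ is, to leading order and with symbolic errors, the constant $e^{-izt_0/h}$, one obtains that $A\chi_1(t)w|_{t=t_0}$ equals $e^{-i(\Re z)t_0/h} h^{\nu(h)t_0} (A u)$ up to an error that is $\mathcal{O}(h)$-better in the relevant Sobolev scale. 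The $h^{\nu(h)t_0}$ factor, being a positive scalar, only shifts the Sobolev order by the amount absorbed into the $\delta$; hence one inclusion gives membership in $\WF^{s-\delta}$ and the reverse direction (losing $\delta$ the other way) gives the outer inclusion, yielding the sandwich $W^{s-\delta}(t_0) \subset \WF^s(h^{\nu(h)t_0}u)\cap T^*X \subset W^s(t_0)$.

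The main obstacle is making precise the claim that "multiplication by $e^{-izt/h}$ is, after $\tau$-localization near $-E$, the same as multiplication by the constant $h^{\nu(h)t_0}$ up to tolerable errors." The subtlety is that $e^{-izt/h}$ is not literally a semiclassical symbol in $(t,\tau)$ — the phase $zt/h$ is large — but after conjugating out the leading oscillation $e^{-izt/h}$ globally (so that $w$ becomes $u(x)$ times a pure phase) one sees that the genuine issue is purely the real exponential weight $e^{(\Im z)t/h} = h^{\nu(h)t/h}$... wait — it is $h^{\nu(h)t}$ once one substitutes $\Im z = \nu(h)h\log h$, since $e^{(\Im z)t/h} = e^{\nu(h)(\log h) t} = h^{\nu(h)t}$. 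On the support of $\chi_1$, $t$ ranges over a small neighborhood of $t_0$, so $h^{\nu(h)t} = h^{\nu(h)t_0} \cdot h^{\nu(h)(t-t_0)}$ and the second factor is $1 + \mathcal{O}(|\log h|\,|t - t_0|)$, which one controls by shrinking the support of $\chi_1$ — this is exactly where the loss $\delta$ in the Sobolev index enters, absorbing a factor of the form $h^{-\varepsilon}$ coming from the variation of the weight and from commutator errors. Once this accounting is done, the bound $|\nu(h)| \le \nu_0$ guarantees the weight $h^{\pm\nu(h)t_0}$ is bounded by a fixed power of $h$, so all error terms are genuinely lower order, and the two inclusions follow by standard elliptic parametrix / wavefront-set arguments applied to $Au$.
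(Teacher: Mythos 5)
Your approach matches the paper's in spirit: exploit the exact factorization $w(t,x) = e^{-izt/h}u(x)$, use \eqref{eq:taulocalization} to reduce to $\tau$ near $-E$, and control the variation of the weight $h^{\nu(h)t}$ over the support of $\chi_1$ by shrinking that support at the cost of $\delta$ in the Sobolev index. However, there are two issues worth flagging.

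First, your machinery is heavier than needed. You discuss "conjugating out the leading oscillation," commuting $A$ with $\chi_0(hD_t)$, and restricting to $\{t=t_0\}$ — the last of which would require a trace-type argument, since restriction to a hypersurface is not $L^2$-bounded. The paper sidesteps all of this: because $w$ is an exact tensor product, the microlocalized expression $\phi_0(hD_x)\phi_1(x)\chi_1(t)w$ equals $\bigl(\chi_1(t)e^{-izt/h}\bigr)\cdot\bigl(\phi_0(hD_x)\phi_1(x)u\bigr)$ exactly, and the $L^2(M)$ norm factors as $\|\chi_1 e^{-izt/h}\|_{L^2(\RR_t)}\,\|\phi_0(hD_x)\phi_1(x)u\|_{L^2(X)}$. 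No symbol calculus in $(t,\tau)$ and no restriction to a slice is needed.

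Second — and this is a genuine gap — your accounting suggests a $\delta$ loss in both directions, which would yield only the weaker sandwich with $W^{s-\delta}$ on the left and $W^{s+\delta}$ on the right (or, as you write, membership in "$\WF^{s-\delta}$"). The lemma asserts no loss on the right. The reason the right inclusion is lossless is the following elementary observation about the $t$-factor: if $\chi_1 \equiv 1$ on $[t_0-a,t_0+a]$ then
\[
\|\chi_1 e^{-izt/h}\|_{L^2(\RR)}^2 \;\geq\; \int_{t_0-a}^{t_0+a} h^{2\nu(h)t}\,dt \;\geq\; a\,h^{2\nu(h)t_0},
\]
since $h^{2\nu(h)t}$ is monotone in $t$ and therefore exceeds $h^{2\nu(h)t_0}$ on at least one of the half-intervals $[t_0-a,t_0]$ or $[t_0,t_0+a]$. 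This gives $\|\chi_1 e^{-izt/h}\|_{L^2}\geq c\,h^{\nu(h)t_0}$ with $c$ independent of $h$, so dividing through yields $\|\phi_0(hD_x)\phi_1(x)u\|_{L^2}=\mathcal{O}(h^{s-\nu(h)t_0})$ with no $\delta$. The loss only enters for the other inclusion, where one needs the \emph{upper} bound $\|\chi_1 e^{-izt/h}\|_{L^2}\leq C h^{\nu(h)t_0-\nu_0 a}$, and that exponent must be absorbed by shrinking $a$. You should make this asymmetry explicit; as written your argument does not deliver the precise inclusion claimed.
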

\begin{proof}
	First observe that $\WF^{s}(w(t_0,\cdot)) = \WF^{s}(h^{\nu(h) t_0}u)$ for each fixed $t_0 \in \RR$. Now suppose that $(x_0,\xi_0) \in T^*X$ satisfies
	\[
	(t_0,x_0,-E,\xi_0) \notin \WF^s(w).
	\] 
	For appropriate cutoffs $\phi_0, \phi_1 \in \CcI(X)$, and $\chi_1 \in \CcI(\RR)$ supported near $\xi_0, x_0$, and $t_0$, respectively, \eqref{eq:taulocalization} implies that
	\[
	\phi_0(hD_x)\phi_1(x)\chi_1(t) w = \mathcal{O}(h^s)_{L^2(M)},
	\]
	which immediately implies that $(x_0,\xi_0) \notin \WF^s(w(t_0,\cdot))$. Conversely, suppose that $(x_0,\xi_0) \notin \WF^s(w(t_0,\cdot)) \cap T^*X$, so in the notation above,
	\[
	e^{-izt_0 /h} \phi_0(hD_x)\phi_1(x)u = \mathcal{O}(h^s)_{L^2(X)}.
	\]
	Now if $\chi_1$ as above has sufficiently small support depending on $\delta > 0$, then we can arrange that
	\[
	\phi_0(hD_x)\phi_1(x)\chi_1(t) w = \mathcal{O}(h^{s-\delta})_{L^2(M)}. 
	\]
	Again using \eqref{eq:taulocalization}, this implies that $(x_0,\xi_0) \notin W^{s-\delta}(t_0)$ for each $\delta > 0$.
\end{proof}

Now we place ourselves in the setting of Theorem \ref{theo:resonancewidth}. If there is a resonance in the set 
\[
[E_0 -\delta,E_0 + \delta] - i[0,\nu_0 h \log(1/h)]
\]
for some $\nu_0 > 0$, then we can find a sequence $h_k \rightarrow 0$, complex numbers $z(h_k)$ satisfying
\begin{equation} \label{eq:zconvergence}
\Re z(h_k) = E + o(1), \quad \Im z(h_k) \geq -\nu_0 h_k\log(1/h_k),
\end{equation}
for some $|E - E_0| < \delta$, and eigenfunctions $u(h_k) \in L^2(X)$ of the complex-scaled operator,
\[
(P_\theta - z(h_k))u(h_k) = 0.
\]
Throughout, we suppress the index $k$. 

Normalize $u$ by $\| u \|_{L^2} = 1$. In order to eventually apply Theorem \ref{theo:diffractiveimprovements}, we verify that the $H^1_h$ (in fact, the $H^2_h$) norms of $u$ are also uniformly bounded in $h$. This follows from standard semiclassical analysis by observing that $P_\theta - V$ is smooth, that
\[
|\sigma_h(P_\theta -V)(x,\xi)| \geq \delta \langle \xi \rangle^2 - C_0
\]
uniformly on $X$, and that $V \in L^\infty(X)$. 

Observe that $\WF^s(u) \neq \emptyset$ for each $s > 0$, since $\| u \|_{L^2} = 1$. Taking $s = \alpha$, we conclude that \[
\WF^\alpha(u) \subset \{\langle \xi \rangle^{-2}(p_\theta-E) = 0\}
\] 
Indeed, away from $Y$ this is ordinary semiclassical elliptic 
regularity (hence applies with any $s$), whereas near $T^*_Y X$, where $p_\theta=p$, we can apply
\cite[Proposition 7.5]{GaWu:18} (whose proof applies nearly verbatim even when $z$ is not real valued).  Of course since $p_\theta$ is elliptic at fiber-infinity, this can be rewritten as
 \[
 \WF^\alpha(u) \subset \{p_\theta =E\}.
 \]

Next, we let $w = e^{-izt/h}u$ with $u = u(h)$ our family of eigenfunctions as above, and observe that Lemma \ref{lem:lifting} applies to the family $w = w(h)$.

\begin{proof}[Proof of Theorem \ref{theo:resonancewidth}]
	
	As noted above, there exists $(x_0,\xi_0) \in \WF^\alpha(u) \cap \{p_\theta =E\}$. Let 
	\[
	\mu_0 = (0,x_0,-E,\xi_0) \in \{q_\theta =0, \tau = -E \}
	\]
	and note that $\mu_0 \in \WF^{\alpha}(w)$ by Lemma \ref{lem:lifting}. Applying Lemmas \ref{lem:Gammapm} and \ref{lem:lifting} we conclude there exists
\[
\rho \in \WF^r(u) \cap \Gamma_{E}^-
\]
for some $r \in \RR$.
This is already a contradiction if $\diam_E(Y) = 0$, since by Lemmas \ref{lem:ICnoWF}, \ref{lem:entersIC}, and \ref{lem:lifting} we have $\rho \in \IC_E$ and hence $\rho \notin \WF^r(u)$.

Otherwise, we derive a lower bound for $\nu_0$ as follows. Since $\WF^0(u) = \emptyset$, we may define
	\begin{equation} \label{eq:s0}
	s_0=\inf \{ s:  \WF^s(u) \cap \Gamma_{E}^- \neq \emptyset\}.
	\end{equation}
	Pick any $s > s_0$, so there is $(x,\xi) \in  \WF^s(u) \cap \Gamma_{E}^-$, and apply Lemma \ref{lem:mustdiffract} to $\mu = (0,x,-E,\xi)$, which by Lemma \ref{lem:lifting} indeed satisfies
	\[
\mu \in \WF^s(w) \cap \{ q_\theta = 0\}.
	\]
	Thus for each $\delta > 0$ we can find $T < \diam_E(Y)+\delta$ such that 
	\[
	\mu' = (-T,x_1,-E,\xi) \in \WF^{s-\alpha}(w) \cap \{q_\theta = 0\}
	\]
	and $(x_1,\xi_1) \in \Gamma_{E}^-$. Furthermore, by Lemma
        \ref{lem:lifting}, $(x_1,\xi_1) \in \WF^{s-\alpha+\nu_0 T + \delta}(u)$, and hence
	\[
	(x_1,\xi_1) \in \WF^{s-\alpha + \nu_0 T + \delta}(u) \cap \Gamma_{E}^-.
	\]
By definition of $s_0$, we must have $s-\alpha + \nu_0 T + \delta > s_0$. But $s > s_0$ and $\delta>0$ are both arbitrary, which implies that 
\[
\nu_0 \geq \alpha/\diam_E(Y).
\] 	
This completes the proof of the Theorem \ref{theo:resonancewidth}.
\end{proof}

\subsection{Applications to quantum evolution}

By a slight modification of its proof, Theorem \ref{theo:resonancewidth} also provides bounds on the cut-off resolvent (or rather its meromorphic continuation) in the resonance-free region given by \eqref{eq:window} (see Lemma \ref{lem:resolventbound} below). This implies that under time evolution, the $L^2$ norm of quantum states which are frequency localized near $E_0$ decay at rate $h^{\nu_0 t}$. For the following result we adopt the notation of Theorem \ref{theo:resonancewidth}.

\begin{lemm} \label{lem:resolventbound}
Given $\chi \in \CcI(X)$, there exist $C_0 > 0$ such that
\begin{equation} \label{eq:resolventbound}
\| \chi (P-z)^{-1} \chi \|_{L^2 \rightarrow L^2 } \leq h^{-C_0}
\end{equation}
for each $z \in [E_0 -\delta, E_0 + \delta] - i[0,\nu_0 h \log(1/h)]$ and $h \in (0,h_0)$.
\end{lemm}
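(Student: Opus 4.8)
The plan is to deduce this from the contradiction argument already carried out in the proof of Theorem~\ref{theo:resonancewidth}, applied now to \emph{quasimodes} rather than exact eigenfunctions of $P_\theta$. First I would reduce to an estimate for the complex-scaled operator: by the theory of complex scaling (\cite[Sections 4.5, 6.2.1]{DyZw:15}), for $z$ in the window \eqref{eq:window}, which for $h$ small lies above the rotated essential spectrum of $P_\theta$, the meromorphic continuation of $\chi(P-z)^{-1}\chi$ agrees with $\chi(P_\theta-z)^{-1}\chi$, and $P_\theta-z$ is Fredholm of index zero there. By Theorem~\ref{theo:resonancewidth} it is invertible, so $(P_\theta-z)^{-1}\colon L^2\to L^2$ is bounded and $\|\chi(P-z)^{-1}\chi\|_{L^2\to L^2}\le C\|(P_\theta-z)^{-1}\|_{L^2\to L^2}$. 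It therefore suffices to produce $C_0>0$ and $h_0>0$ such that
\[
\|u\|_{L^2}\le h^{-C_0}\|(P_\theta-z)u\|_{L^2}
\]
for all $u$ in the domain of $P_\theta$, all $z$ in the window, and all $h\in(0,h_0)$.

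I would argue by contradiction. If no such $C_0$ exists, then applying the negation with $C_0=k$ and $h_0=1/k$ and diagonalizing produces a sequence $h_k\to0$, numbers $z_k$ in the window \eqref{eq:window}, and $u_k$ in the domain with $\|u_k\|_{L^2}=1$ and $\|(P_\theta-z_k)u_k\|_{L^2}=\mathcal{O}(h_k^\infty)$. Passing to a subsequence we may assume $\Re z_k\to E$ for some $E\in[E_0-\delta,E_0+\delta]$; since $\Im z_k\to0$ we have $z_k=E+o(1)$ and $\nu(h_k)\in[0,\nu_0]$. Exactly as in the proof of Theorem~\ref{theo:resonancewidth}, writing $(P_\theta-V-z_k)u_k=(P_\theta-z_k)u_k-Vu_k$ and using that $P_\theta-V$ is smooth and elliptic at fiber-infinity with symbol $\gtrsim\langle\xi\rangle^2$ together with $V\in L^\infty$, one obtains $\|u_k\|_{H^2_h}\le C$; in particular $u_k$ is $h$-tempered in $H^1_{h,\loc}(X)$, and (by semiclassical elliptic regularity away from $Y$ and \cite[Proposition 7.5]{GaWu:18} near $T^*_YX$) $\emptyset\neq\WF^\alpha(u_k)\subset\{p_\theta=E\}$. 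Setting $w_k=e^{-iz_kt/h}u_k$, one has $Q_\theta w_k=e^{-iz_kt/h}(P_\theta-z_k)u_k$; since $\|e^{-iz_kt/h}\|_{L^\infty(|t|\le T)}\le h_k^{-\nu_0 T}$ grows only polynomially in $1/h_k$, it follows that $Q_\theta w_k=\mathcal{O}(h_k^\infty)_{L^2_\loc}$, and $w_k$ is $h$-tempered in $H^1_{h,\loc}(M)$ because $\nu(h_k)=\mathcal{O}(1)$.

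These are precisely the hypotheses under which the proof of Theorem~\ref{theo:resonancewidth} was run: that argument never used the equation $(P_\theta-z)u=0$ beyond its consequences $Q_\theta w=\mathcal{O}(h^\infty)_{L^2_\loc}$ and $\WF^\alpha(u)\subset\{p_\theta=E\}$. Hence Lemmas \ref{lem:ICnoWF}--\ref{lem:mustdiffract} and the wavefront-tracking argument apply verbatim to the $u_k$ and yield $\nu_0\ge\alpha/\diam_{[E_0-\delta,E_0+\delta]}(Y)$ (or, in the case $\diam_{[E_0-\delta,E_0+\delta]}(Y)=0$, they produce some $r\in\RR$ with $\WF^r(u_k)\cap\Gamma_E^-=\emptyset$ forcing $u_k$ to be $\mathcal{O}(h_k^\infty)$ microlocally near the characteristic set and hence $\|u_k\|_{L^2}\to0$, contradicting normalization). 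Either way this contradicts the standing hypothesis $\nu_0<\alpha/\diam_{[E_0-\delta,E_0+\delta]}(Y)$, which proves the lemma.

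The only genuinely substantive point is the one flagged in the previous paragraph: one must check that \emph{every} appeal to the eigenvalue equation in the proof of Theorem~\ref{theo:resonancewidth} tolerates an $\mathcal{O}(h^\infty)_{L^2}$ right-hand side. This is the case because all the microlocal inputs --- semiclassical elliptic regularity, the propagation results of Theorem~\ref{theo:diffractiveimprovements}, and \cite[Proposition 7.5]{GaWu:18} --- concern the $h$-wavefront set and are insensitive to $\mathcal{O}(h^\infty)_{L^2}$ errors, so the only bookkeeping needed is to absorb the polynomial factor $h_k^{-\nu_0 T}$ coming from the complex exponential in $w_k$ into the $\mathcal{O}(h_k^\infty)$ bound, which is immediate. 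I do not expect any other obstacle; the rest is the diagonal extraction and the compactness argument described above.
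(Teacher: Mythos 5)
Your proof is correct and takes essentially the same approach as the paper's: both reduce to the scaled operator $P_\theta$, negate and diagonalize to extract normalized quasimodes with $(P_\theta-z_k)u_k=\mathcal{O}(h_k^\infty)$, and then rerun the wavefront-tracking argument of Theorem~\ref{theo:resonancewidth}. The only cosmetic difference is that you claim $Q_\theta w_k=\mathcal{O}(h_k^\infty)_{L^2_\loc}$ outright, while the paper more conservatively invokes that propagation of $\WF^s$ only requires $\WF^{s+1}(Q_\theta w)=\emptyset$ over the finite time interval actually used; both are correct, since the implied constants in the $L^2_\loc$ bound are allowed to depend on the compact time window and $C_k\to\infty$ absorbs the polynomial factor $h_k^{-\nu_0 T}$ from the exponential.
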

\begin{proof}
We begin with a preliminary observation: although for simplicity the propagation results for $Q_\theta$ used previously were stated for $h$-tempered families $w= w(h)$ satisfying $Q_\theta w = \mathcal{O}(h^\infty)_{L^2_\loc}$, in actuality propagation of $\WF^s(w)$ only requires that $\WF^{s+1}(Q_\theta w) = \emptyset$ (see the more precise statements throughout \cite{GaWu:18}). 

Recall that $\chi (P-z)^{-1} \chi = \chi (P_\theta - z)^{-1}\chi$ when the scaling region is chosen appropriately depending on $\chi$ (see \cite[Theorem 4.37]{DyZw:15}). Suppose that \eqref{eq:resolventbound} does not hold in the region \eqref{eq:window}. Then there exist $h_k > 0$ and $z(h_k) \in \CC$ satisfying \eqref{eq:zconvergence}, as well as $u(h_k) \in L^2(X)$ such that
\[
\| u(h_k) \|_{L^2} = 1, \quad \| (P_\theta - z(h_k))u(h_k) \|_{L^2} \leq h_k^{C_k},
\]
where $0 < C_k \rightarrow \infty$.
The idea is to obtain a lower bound on $\nu_0$ by repeating the argument of Theorem \ref{theo:resonancewidth} but with the family $u = u(h)$ as above (again we suppress the index $k$). 

Let $s_0$ be defined as in \ref{eq:s0}, and pick any $s > s_0$. After forming the functions $w = e^{-izt/h}u$, the proof of Theorem \ref{theo:resonancewidth} proceeds by applying propagation of $\WF^{s}(w)$. While it is not true that $\WF^{s+1}(Q_\theta w) = \emptyset$ globally on $M$, it is true over finite time intervals for $k$ sufficiently large; since we only use propagation for $Q_\theta$ on a fixed finite time interval (related to the diameter of $Y$), the same argument applies, showing that $\nu_0 \geq \alpha /\diam_{E}(Y)$. 
\end{proof} 

Again using the notation of Theorem \ref{theo:resonancewidth}, we provide asymptotics for the Schr\"odinger propagator.

\begin{prop}\label{prop:asymptotics}
	Let $\chi \in \CcI(X)$ and $\psi \in \CcI((E_0-\delta,E_0+\delta))$. There exists $C, T_0 > 0$ such that if $t \geq T_0$, then
	\[
	\| \chi e^{-itP/h} \psi(P) \chi \|_{L^2 \rightarrow L^2} \leq Ch^{\nu_0 t}.
	\]
\end{prop}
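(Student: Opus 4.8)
The plan is to derive the propagator bound from the resolvent bound of Lemma~\ref{lem:resolventbound} by a standard contour-deformation argument, exploiting the fact that the resolvent is holomorphic in the resonance-free region \eqref{eq:window} with only polynomial-in-$h$ growth there. First I would write the Schr\"odinger propagator, localized in energy, via the Stone-type formula
\[
\chi e^{-itP/h}\psi(P)\chi = \frac{1}{2\pi i h}\int_{\RR} e^{-itz/h}\,\psi(z)\,\chi\big((P-z-i0)^{-1}-(P-z+i0)^{-1}\big)\chi\,dz,
\]
and then express this as a single contour integral of the meromorphically continued cutoff resolvent $\chi(P-z)^{-1}\chi$ over a contour encircling $\operatorname{supp}\psi$ from both sides of the real axis. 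Since $\psi$ is supported in $(E_0-\delta,E_0+\delta)$ and the resolvent has no poles in the box \eqref{eq:window} for $h<h_0$, I can push the lower part of the contour down to the line $\Im z = -\nu_0 h\log(1/h)$, picking up no residues. (The upper part of the contour, in the region $\Im z>0$, contributes $\mathcal O(h^\infty)$ or can be handled by the elementary bound $\|(P-z)^{-1}\|\le 1/|\Im z|$ together with an almost-analytic extension of $\psi$; alternatively one works with $\psi(P)$ from the start so only the cut along the real axis matters.)

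The key step is then the estimate on the deformed contour. On the horizontal segment $\Im z = -\nu_0 h\log(1/h)$, $\Re z \in [E_0-\delta,E_0+\delta]$, the factor $e^{-itz/h}$ has modulus $e^{t\,\Im z/h} = e^{-\nu_0 t\log(1/h)} = h^{\nu_0 t}$, and Lemma~\ref{lem:resolventbound} bounds $\|\chi(P-z)^{-1}\chi\|$ by $h^{-C_0}$ there; on the two short vertical segments joining the real axis to this line, one uses the crude a priori bound $\|\chi(P-z)^{-1}\chi\|\le C/|\Im z|\le C h^{-1}(\log(1/h))^{-1}$ for $\Im z<0$ and the length of the segment is $O(h\log(1/h))$, so these contribute $O(h^{\nu_0 t})$ up to logarithmic factors as well. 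Collecting terms, the contour integral is bounded by $C h^{-1}\cdot h^{\nu_0 t}\cdot h^{-C_0}\cdot(\text{length})$; a slightly more careful bookkeeping — keeping $\delta$ and the length of the contour fixed — gives $\|\chi e^{-itP/h}\psi(P)\chi\|\le C h^{\nu_0 t - C_1}$ for some fixed $C_1$ independent of $t$. To absorb the loss $h^{-C_1}$ and obtain the clean bound $Ch^{\nu_0 t}$, I would argue that for $t\ge T_0$ with $T_0$ large one may instead run the contour deformation with $\nu_0$ replaced by a slightly larger $\nu_0'$ still satisfying $\nu_0' < \alpha/\diam_{[E_0-\delta,E_0+\delta]}(Y)$ (possible since the hypothesis only requires strict inequality), so that $h^{\nu_0' t - C_1}\le h^{\nu_0 t}$ once $(\nu_0'-\nu_0)T_0 \ge C_1$; this fixes $T_0$.

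The main obstacle I anticipate is the bookkeeping at the vertical ends of the contour and near the endpoints $\Re z = E_0\pm\delta$ of the energy window: there the resolvent is not known to be polynomially bounded (we are at the edge of the resonance-free region), so one must arrange that $\psi$ is supported strictly inside $(E_0-\delta,E_0+\delta)$ and deform only over the region where $\psi\ne 0$, connecting to the real axis through a subinterval where no resonances lie and where the trivial bound on the physical-side resolvent ($\Im z\ne 0$) suffices. A secondary technical point is justifying the representation of $\chi e^{-itP/h}\psi(P)\chi$ as a contour integral of the meromorphically continued cutoff resolvent — this is classical (see, e.g., \cite[Chapter 3]{DyZw:15}), using that $\psi(P)\chi = \chi\psi(P)$ modulo $\mathcal O(h^\infty)$ by pseudodifferential calculus and almost-analytic extension of $\psi$, so I would invoke it rather than reprove it. Once the contour is in place and Lemma~\ref{lem:resolventbound} is fed in, the estimate is routine.
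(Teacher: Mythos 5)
Your proposal is correct and follows essentially the same route as the paper: a contour deformation in the spectral representation of $e^{-itP/h}\psi(P)$ down to depth $\nu_0' h\log(1/h)$ with $\nu_0'$ slightly larger than $\nu_0$, feeding in the polynomial resolvent bound of Lemma~\ref{lem:resolventbound}, and absorbing the resulting $h^{-C_1}$ loss by taking $t\geq T_0$ large. The paper simply delegates the contour-deformation bookkeeping (including the vertical segments and the almost-analytic extension issues you flag) to \cite[Theorem 7.15]{DyZw:15}, whereas you spell it out.
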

\begin{proof}
The proof is the same as in \cite[Theorem 7.15]{DyZw:15}, which is based on a contour deformation in the spectral representation of $e^{-itP/h} \psi(P)$ to a horizontal line of depth $Mh\log(1/h)$; in the given reference $M > 0$ can be taken arbitrary, but here we take 
\[
M = \nu_0 + \varepsilon,
\] 
where $\varepsilon > 0$ is such that $M < \alpha/\diam_{[E_0 -\delta,E_0+\delta]}(Y)$. The resulting bound is
\[
\| \chi e^{-itP/h} \psi(P) \chi \|_{L^2 \rightarrow L^2} \leq Ch^{M t -M'} + \mathcal{O}(h^\infty \langle t \rangle^{-\infty}).
\]
for some $M' > 0$. The result follows for $t > 0$ sufficiently large.
\end{proof}

\section{Existence of resonances in one dimension}

\subsection{WKB solutions}
Throughout this section we adopt the notation of Theorem \ref{theo:existence} and the paragraph preceding it. Since $\supp V \subset [0,L]$, a complex number $z \in \CC$ is a resonance of $P = (hD_x)^2 + V$ precisely if it satisfies
\begin{equation}
\begin{cases} \label{eq:outoingcondition}
(P-z)u = 0, \\
hD_x u (0) + z^{1/2} u(0) =0,\\
hD_x u (L) - z^{1/2} u(L) =0.
\end{cases}
\end{equation}
Fix an interval $[a,b]$ with $a > \sup V$. Given $M > 0$, set
\[
\Omega_M(h) = [a,b] + i[-Mh \log(1/h),0].
\]
The fact that $I = [0,L]$ is a classically allowed region implies the following:
\begin{lemma} \label{lem:polynomialgrowth}
If $u$ solves the equation $(P-z)u = 0$ with initial conditions 
\[
u(0) =\mathcal{O}(1), \quad  hD_x u(0) = \mathcal{O}(1),
\] 
then $u$ and $\pa_x u$ are polynomially bounded on $I = [0,L]$ uniformly in $z \in \Omega_M(h)$
\end{lemma}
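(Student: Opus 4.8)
The plan is to convert the second-order ODE $(P-z)u = 0$ into a first-order system and apply a Gr\"onwall-type estimate, using the fact that $[0,L]$ is classically allowed (i.e., $\Re z > a > \sup V$) to control the growth. Writing $U = (u, hD_x u)^{\mathsf{T}}$, the equation $(hD_x)^2 u = (z - V)u$ becomes
\[
h \frac{d}{dx} U = \frac{1}{i}\begin{pmatrix} 0 & 1 \\ z - V(x) & 0 \end{pmatrix} U =: \frac{1}{i} A(x,z) U.
\]
Since $V$ is smooth on $[0,L]$ (property (2) in the setup preceding Theorem \ref{theo:existence}) and $z$ ranges over the bounded set $\Omega_M(h) \subset \CC$, the matrix $A(x,z)$ is uniformly bounded: $\|A(x,z)\| \leq C_0$ for all $x \in [0,L]$ and $z \in \Omega_M(h)$, with $C_0$ independent of $h$. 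From the differential inequality $\frac{d}{dx}\|U(x)\| \leq h^{-1}\|A\|\,\|U(x)\| \leq C_0 h^{-1}\|U(x)\|$, Gr\"onwall gives $\|U(x)\| \leq \|U(0)\| e^{C_0 L/h}$, which is only exponential in $1/h$ and hence \emph{not} good enough. The point of the classically allowed hypothesis is that it kills the exponential: one should instead use an energy/WKB-adapted weight.

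The clean way to get polynomial (indeed uniform) bounds is the following. Fix a reference point, say $x=0$, and introduce the WKB phase $\Phi(x,z) = \int_0^x (z - V(s))^{1/2}\,ds$, where the square root is the principal branch; since $\Re z > \sup V$ and $\Im z = \mathcal{O}(h\log(1/h)) = o(1)$, the quantity $z - V(s)$ stays in a fixed compact subset of $\CC \setminus (-\infty, 0]$ bounded away from $0$, so $(z-V(s))^{1/2}$ is smooth in $x$, uniformly bounded above and below, and $\Im \Phi(x,z) = \mathcal{O}(h\log(1/h))$. Conjugating the system by $\operatorname{diag}(1, (z-V)^{1/2})$ and then by $e^{\pm i\Phi/h}$ diagonalizes the principal part: in the new variables $\widetilde{U}_\pm$ the system reads $h\,\widetilde{U}' = B(x,z)\widetilde{U}$ where $B$ is now uniformly bounded \emph{and} $\mathcal{O}(h)$ in norm — it collects only the subprincipal terms $\partial_x \log (z-V)^{1/2}$, which are smooth and bounded on $[0,L]$ uniformly in $z \in \Omega_M(h)$. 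Then Gr\"onwall applied to $\widetilde U$ gives $\|\widetilde U(x)\| \leq \|\widetilde U(0)\| \exp(C_1 L) = \mathcal{O}(1)$ since the $h^{-1}$ and the $h$ cancel. Undoing the conjugations costs only the factor $e^{\pm \Im\Phi/h} = \mathcal{O}(h^{-C})$ with $C = C(M,L)$ (coming from $|\Im\Phi|/h \leq C'\log(1/h)$) together with bounded diagonal factors, yielding the claimed polynomial bound on $u$ and $hD_x u$, hence on $\pa_x u$, uniformly in $z \in \Omega_M(h)$.

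The main obstacle is purely bookkeeping: verifying that all the conjugating matrices and their $x$-derivatives are bounded above and bounded away from degeneracy \emph{uniformly} in $z \in \Omega_M(h)$, which rests entirely on the separation $a > \sup V$ forcing $\Re(z - V(s)) \geq a - \sup V > 0$ for all $h$ small, so that no turning points enter $[0,L]$ and the square root never approaches the branch cut. Once that uniform ellipticity of $z - V$ on $[0,L]$ is recorded, the estimate is a one-line Gr\"onwall argument. One could alternatively avoid the explicit WKB conjugation and argue directly with the energy functional $\mathcal{E}(x) = |hD_x u(x)|^2 + (\Re z - V(x))|u(x)|^2$, whose $x$-derivative is $\mathcal{O}(h\log(1/h))\,\mathcal{E}(x) + \mathcal{O}(1)\mathcal{E}(x)$ — wait, more carefully $\mathcal{E}' = -V'|u|^2 + 2\Im z\,\Im(\overline{u}\,hD_xu)/h$, and since $\Re z - V \geq c > 0$ controls $|u|^2 \leq c^{-1}\mathcal{E}$ while $|hD_xu|^2 \leq \mathcal{E}$, one gets $|\mathcal{E}'| \leq C(1 + |\Im z|/h)\mathcal{E} = \mathcal{O}(\log(1/h))\mathcal{E}$, and Gr\"onwall over the fixed interval $[0,L]$ gives $\mathcal{E}(x) \leq \mathcal{E}(0)e^{CL\log(1/h)} = \mathcal{O}(h^{-CL})$, which is exactly a polynomial bound. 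This second route is shorter and I would present it as the main argument.
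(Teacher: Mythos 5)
Your second route — the differential inequality for $\mathcal{E}(x)=|hD_xu|^2+(\Re z-V)|u|^2$ — is precisely the ``semiclassical energy estimate'' the paper cites (H\"ormander \S 23.2, Dyatlov--Zworski Lemma E.60), so you have reconstructed the paper's argument, with the $\Im z=\mathcal{O}(h\log(1/h))$ hypothesis entering exactly as the paper indicates. One small slip: from $h^2u''=(V-z)u$ one actually gets $\mathcal{E}'=-V'|u|^2-\tfrac{2\Im z}{h}\Re\!\bigl(\bar u\,hD_xu\bigr)$ rather than $+\tfrac{2\Im z}{h}\Im(\bar u\,hD_xu)$, but since you only use the modulus of this term together with $|u|^2\le c^{-1}\mathcal{E}$, $|hD_xu|^2\le\mathcal{E}$ (with $c=a-\sup V>0$), the Gr\"onwall bound $\mathcal{E}(x)\le\mathcal{E}(0)\,h^{-CL}$ and hence the conclusion are unaffected; the WKB-conjugation route you sketch first is also valid but unnecessary here.
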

\begin{proof}
 This follows from a semiclassical version of the energy estimates in \cite[Section 23.2]{Hormander3}; cf. \cite[Lemma E.60]{DyZw:15}. At most polynomial growth in $h$ arises from the fact that the imaginary part of $z$ can be of size $\mathcal{O}(h\log(1/h))$.
\end{proof}

Next we consider approximate WKB solutions to $(P-z)u=0$. Following \cite{berry1982semiclassically}, for this problem it is convenient to consider approximate solutions in exponential form. Define 
\[
\psi_0 = (z- V)^{1/2},
\]
and then set $\psi_{+,0} = \psi_{-,0} = \psi_0$. For $i \geq 1$, define $\psi_{\pm,i}$ recursively by 
\begin{equation} \label{eq:recursion}
\psi_{\pm,k}(x) = \pm \frac{i}{2\psi_0(x)}\pa_{x}\psi_{\pm,k-1}(x) - \frac{1}{2\psi_0(x)}\sum_{j=1}^{k-1}\psi_{\pm,j}(x)\psi_{\pm,k-j}(x).
\end{equation}
Each $\psi_{\pm,k}$ is smooth on $I$ and depends holomorphically on $z \in [a,b] + i[-C_0,0]$. Let $\psi_\pm$ be a function admitting an asymptotic expansion
\[
\psi_\pm \sim \sum_{j=0}^\infty  h^j \psi_{\pm,j}
\]
and depending holomorphically on $z$. We then set 
\[
\varphi_\pm (x)= \int_0^x \psi_\pm(s) \, ds, \quad u_\pm = e^{\pm i\varphi_{\pm}/h}.
\]
 If $z \in \Omega_M(h)$, then $u_\pm$ are polynomially bounded on $I$, and hence the recursion relation \eqref{eq:recursion} guarantees that
\[
(P-z)u_\pm = \mathcal{O}(h^\infty)_{\CI(I)}, \quad z \in \Omega_M(h)
\]
for any fixed $M>0$. The usefulness of this exponential form comes from the following observation (cf. \cite[Appendix 2]{berry1982semiclassically}).

\begin{lemma}
For each $j$,
\[
\psi_{\pm,j}(x) = \left( \frac{\pm i}{2\psi_0(x)}\right)^j \psi^{(j)}_0(x) + F_j(\psi_0(x), \psi_0'(x),\ldots,\psi_0^{(j-1)}(x))
\]
for a smooth function $F_j(t_0,t_1,\ldots,t_{j-1})$ such that $F_j(t_0,0,\ldots,0) = 0$ for all $t_0 \in \RR$. 
\end{lemma}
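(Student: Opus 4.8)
The plan is to prove the identity by induction on $j$, directly exploiting the structure of the recursion \eqref{eq:recursion}. The base case $j=0$ is immediate since $\psi_{\pm,0} = \psi_0$, which matches the claimed formula with $F_0 \equiv 0$ (the sum defining $F_j$ is empty). The case $j=1$ is also visible directly from \eqref{eq:recursion}: $\psi_{\pm,1} = \pm \frac{i}{2\psi_0} \psi_0'$, so again $F_1 \equiv 0$, consistent with the claim that $F_j(t_0,0,\dots,0)=0$.

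For the inductive step, suppose the formula holds for all indices $< k$. The recursion expresses $\psi_{\pm,k}$ as two pieces. The first piece is $\pm\frac{i}{2\psi_0}\pa_x \psi_{\pm,k-1}$; applying the inductive hypothesis to $\psi_{\pm,k-1}$ and differentiating, the dominant term comes from differentiating the leading term $\left(\frac{\pm i}{2\psi_0}\right)^{k-1}\psi_0^{(k-1)}$, and when the derivative falls on $\psi_0^{(k-1)}$ one gets exactly $\left(\frac{\pm i}{2\psi_0}\right)^{k}\psi_0^{(k)}$ — this is the claimed leading term. Every other term produced — the derivative falling on the prefactor $\psi_0^{-(k-1)}$, or any derivative of $F_{k-1}$ — produces something that is a smooth function of $\psi_0,\dots,\psi_0^{(k-1)}$ vanishing when $\psi_0',\dots,\psi_0^{(k-1)}$ all vanish (here one uses that $\pa_x$ of a function vanishing on $\{t_1=\dots=t_{k-2}=0\}$ still vanishes there after the chain rule introduces a factor of some $\psi_0^{(i)}$ with $i\geq 1$, together with the fact that $\pa_x \psi_0 = \psi_0'$ contributes a factor $\psi_0'$). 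The second piece is $-\frac{1}{2\psi_0}\sum_{j=1}^{k-1}\psi_{\pm,j}\psi_{\pm,k-j}$, a sum of products of two factors each of index between $1$ and $k-1$; by the inductive hypothesis each factor $\psi_{\pm,j}$ is itself a smooth function of $\psi_0,\dots,\psi_0^{(j-1)}$ (with $j-1 \leq k-2 < k$) that vanishes when $\psi_0'=\dots=\psi_0^{(j-1)}=0$, hence the whole product, divided by $2\psi_0$, is a smooth function of $\psi_0,\dots,\psi_0^{(k-2)}$ vanishing on $\{t_1=\dots=0\}$. Collecting, $\psi_{\pm,k}$ has the claimed leading term plus an allowable $F_k$.

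The routine-but-slightly-delicate bookkeeping is to track exactly which derivatives of $\psi_0$ appear and to verify the vanishing property survives differentiation; the cleanest way is to set up the induction with a precise hypothesis of the form ``$\psi_{\pm,j}$ is a polynomial (or smooth function) in $\psi_0^{-1}, \psi_0, \psi_0', \dots, \psi_0^{(j)}$, each of whose monomials other than the displayed leading term contains at least one factor $\psi_0^{(i)}$ with $1 \leq i \leq j-1$,'' and then note that $\pa_x$ acting on such an expression, and multiplication of two such, both preserve this shape while raising the top order of derivative appropriately and keeping the leading term's coefficient exact. The main obstacle — such as it is — is purely organizational: making the statement of the inductive hypothesis strong enough to be stable under the two operations in \eqref{eq:recursion} (differentiation and quadratic combination) while still implying the stated conclusion, so that one avoids an unwieldy explicit description of $F_j$.
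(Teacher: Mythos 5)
Your proof is correct and is essentially a fleshed-out version of the paper's own one-sentence proof, which simply says the result ``follows by induction from the recursion relations.'' The bookkeeping you describe---extracting the leading term $\left(\frac{\pm i}{2\psi_0}\right)^k\psi_0^{(k)}$ from the derivative of the previous leading term, and verifying that every other contribution (derivative hitting the prefactor, chain-rule terms from $F_{k-1}$, and the quadratic sum) carries a factor of some $\psi_0^{(i)}$ with $i\geq 1$---is precisely the argument the paper leaves implicit.
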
 
\begin{proof}
This follows by induction from the recursion relations \eqref{eq:recursion}.
\end{proof}

We then obtain the following corollary: if $V$ vanishes to order $k \geq 1$ at a point $x_0 \in I$, then $\psi_{\pm,j}(x_0) = 0$ for $j = 1,\ldots,k-1$, and
\begin{equation} \label{eq:psiatvanishing}
\psi_{\pm,k}(x_0) = -i^{\pm k} (2z^{1/2})^{-k-1} V^{(k)}(x_0).
\end{equation}
If $x_0$ is an endpoint of $I = [0,L]$, then the same formulas hold in the sense of one-sided limits. Indeed,
\[
V(x) = c_0(x -  x_0)^k + \mathcal{O}(|x-x_0|^{k+1}), \quad c_0 =  V_0^{(k)}(x_0)/k!,
\]
which shows that
\[
\psi_0(x) = z^{1/2} - (c_0/2)z^{-1/2} (x-x_0)^k + \mathcal{O}(|x-x_0|^{k+1}).
\]
The formula \eqref{eq:psiatvanishing} follows immediately from this expression.
\subsection{Outgoing condition}
Observe that $u_\pm(0) = 1$ and $hD_x u_\pm(0) = \pm \psi_\pm(0)$, so if we form the function
\[
v = u_- - \frac{z^{1/2} - \psi_-(0)}{z^{1/2} + \psi_+(0)} u_+,
\]
then $hD_x v(0) = -z^{1/2}v(0)$, and $v$ is polynomially bounded on $I$ for $z \in \Omega_M(h)$.

\begin{lemma} \label{lem:wronskian}
Let $u$ solve the equation $(P-z)u = 0$ with initial data $u(0) = v(0)$ and $hD_x u(0) = -z^{1/2}u(0)$. Then
\[
u = v + \mathcal{O}(h^\infty)_{\CI(I)}
\] 
uniformly for $z \in \Omega_M(h)$.
\end{lemma}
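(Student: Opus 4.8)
The plan is to control the error $e = u - v$. By construction, $v$ satisfies the same boundary condition at the left endpoint as $u$, namely $hD_x v(0) = -z^{1/2} v(0)$, and moreover $(P-z)v = \mathcal{O}(h^\infty)_{\CI(I)}$ uniformly for $z \in \Omega_M(h)$. Consequently $e$ solves the Cauchy problem
\[
(P-z)e = -(P-z)v =: r = \mathcal{O}(h^\infty)_{\CI(I)}, \qquad e(0) = 0, \quad hD_x e(0) = 0,
\]
with all bounds uniform in $z \in \Omega_M(h)$. Everything then reduces to showing that any solution of such a Cauchy problem with right-hand side $\mathcal{O}(h^\infty)$ is itself $\mathcal{O}(h^\infty)_{\CI(I)}$; the only substantive input is that on the classically allowed region $I$ --- which is where the hypothesis $a > \sup V$ is used --- the homogeneous solutions oscillate rather than grow exponentially in $1/h$.

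I would make this precise by fixing the fundamental system $f_\pm$ for $(P-z)f = 0$ determined by the Cauchy data $f_\pm(0) = 1$, $hD_x f_\pm(0) = \pm z^{1/2}$. Lemma \ref{lem:polynomialgrowth} applies to $f_\pm$ and shows that $f_\pm$ and $\pa_x f_\pm$ are polynomially bounded on $I$ uniformly for $z \in \Omega_M(h)$; this is precisely the step where classical allowedness enters, and where the $\mathcal{O}(h \log(1/h))$ size of $\Im z$ is responsible for the loss of (at most) finitely many powers of $h$. Since $(P-z)f = 0$ has no first-order term, the Wronskian $W = f_+\, \pa_x f_- - f_-\, \pa_x f_+$ is constant in $x$ by Abel's identity, and evaluating at $x = 0$ gives $W = -2iz^{1/2}/h$; in particular $|W| \geq c\, h^{-1}$ for some $c > 0$, uniformly on $\Omega_M(h)$ for $h$ small.

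With this fundamental system the variation-of-parameters formula for the Cauchy problem reads
\[
e(x) = \frac{1}{h^2 W}\int_0^x \big( f_-(y) f_+(x) - f_+(y) f_-(x) \big)\, r(y)\, dy,
\]
and since $1/(h^2 W) = \mathcal{O}(h^{-1})$, the $f_\pm$ are polynomially bounded on $I$, and $r = \mathcal{O}(h^\infty)_{\CI(I)}$, we obtain $e = \mathcal{O}(h^\infty)$ in $C^0(I)$ uniformly for $z \in \Omega_M(h)$; differentiating the formula once more yields the same bound for $\pa_x e$. To pass to $\CI(I)$, rewrite the equation as $h^2 \pa_x^2 e = (V - z) e - r$ and differentiate repeatedly, propagating the $\mathcal{O}(h^\infty)$ estimate to all derivatives. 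The main obstacle is essentially bookkeeping: ensuring the polynomial-in-$h$ bounds on $f_\pm$ and the lower bound on $W$ are genuinely uniform over the $h$-dependent complex window $\Omega_M(h)$ --- but this is exactly what Lemma \ref{lem:polynomialgrowth} provides, so no new idea is required beyond it.
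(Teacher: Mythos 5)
Your proof is correct, and it takes a genuinely different route from the paper's. The paper expresses $u$ directly in the approximate basis $\{u_-, u_+\}$ via the algebraic identity
\[
u = \frac{\mathcal{W}_h(u,u_+)\,u_- - \mathcal{W}_h(u,u_-)\,u_+}{\mathcal{W}_h(u_-,u_+)},
\]
which holds pointwise in $x$ for any three functions; the key observation is that because $u_\pm$ are approximate solutions and all quantities are polynomially bounded on $I$, the three semiclassical Wronskians are constant modulo $\mathcal{O}(h^\infty)$, so they may be evaluated at $x=0$ using the initial data, and the resulting linear combination of $u_\pm$ is precisely $v$. By contrast, you subtract first and control the remainder $e = u - v$, which solves a forced Cauchy problem with zero initial data and $\mathcal{O}(h^\infty)$ right-hand side, then run variation of parameters with an \emph{exact} fundamental system $f_\pm$. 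Both arguments hinge on the same input --- polynomial boundedness on the classically allowed region for $z \in \Omega_M(h)$, i.e.\ Lemma \ref{lem:polynomialgrowth} --- and the same $h^{-1}$ size of the Wronskian, so the losses match. Your version is somewhat more pedestrian but also more robust (it does not rely on the pointwise Wronskian identity for approximate solutions and makes the constancy-modulo-$\mathcal{O}(h^\infty)$ step unnecessary for the approximate system); the paper's version is shorter because it never needs to introduce the exact solutions $f_\pm$. Two small points worth making explicit in your write-up: the coefficient $\frac{z^{1/2}-\psi_-(0)}{z^{1/2}+\psi_+(0)}$ appearing in $v$ is $\mathcal{O}(h^k)$ (in particular bounded) and holomorphic on $\Omega_M(h)$, which is what makes $(P-z)v = \mathcal{O}(h^\infty)_{\CI(I)}$ hold uniformly; and the bootstrap from $C^1$ to $\CI$ control via $h^2\pa_x^2 e = (V-z)e - r$ incurs a factor $h^{-2}$ per differentiation, which is harmless against $\mathcal{O}(h^\infty)$ but should be stated so the uniformity is clear.
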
 
\begin{proof}
This follows from the identity 
\[
u = \frac{\mathcal{W}_h(u,u_+)u_ - - \mathcal{W}_h(u,u_-)u_+}{\mathcal{W}_h(u_-,u_+)},
\]
where $\mathcal{W}_h(f,g) = f \cdot  (h\pa_x g) - (h\pa_x f)\cdot g$ is the semiclassical Wronskian. Indeed, the fact that $u$ and $u_\pm$ are polynomially bounded on $I$ for $z \in \Omega_M(h)$ implies that all Wronskians appearing in the formula above are constant modulo $\mathcal{O}(h^\infty)$. It is then a straightfoward computation of the Wronskians at $x=0$ using the specified initial conditions.
\end{proof} 	

Fix $M > 0$. In view of \eqref{eq:outoingcondition} and Lemma \ref{lem:wronskian}, resonances $z \in \Omega_M(h)$ are characterized as solutions to an equation of the form
\begin{equation} \label{eq:qc}
\left(\frac{\psi_+(L) - z^{1/2}}{\psi_+(L) + z^{1/2}} \right) \left( \frac{\psi_-(0) - z^{1/2}}{\psi_-(0)+z^{1/2}} \right)  e^{i\varphi_+(L)/h} -e^{-i\varphi_-(L)/h} = \mathcal{O}(h^\infty).
\end{equation}
Both sides of this equation are holomorphic in $z \in \Omega_M(h)$. We replace this with a simpler expression by inserting the asymptotics of $\psi_\pm$, making sure to only incur errors that are holomorphic in $z \in \Omega_M(h)$; we will continue to use ordinary Landau notation to denote these errors. First, note that according to \eqref{eq:psiatvanishing},
\begin{align*}
&\psi_+(L) - z^{1/2} =  -i^l h^l(2z^{1/2})^{-l-1}V^{(l)}(L^-)(1+\mathcal{O}(h))\\
&\psi_-(0)  - z^{1/2}  =  -i^{-k} h^k (2z^{1/2})^{-k-1}V^{(k)}(0^+)(1+\mathcal{O}(h)), \\
&\psi_+(L) + z^{1/2} = 2z^{1/2} + \mathcal{O}(h^l),\\ 
&\psi_-(0) + z^{1/2} = 2z^{1/2} + \mathcal{O}(h^k).
\end{align*}
We also multiply \eqref{eq:qc} through by $e^{i\varphi_-(L)/h} =
\mathcal{O}(h^{-N})$ (for some $N \in \NN$). Define the phase function
\begin{equation} \label{eq:phase}
\varphi(x) = \int_0^x (z-V(s))^{1/2}\, ds,
\end{equation}
and observe that $\varphi_+(L) + \varphi_-(L) = 2\varphi(L) + \mathcal{O}(h^2)$ since $\psi_{+,1} = -\psi_{-,1}$. In particular, 
\[
e^{i(\varphi_+(L) + \varphi_-(L))/h} = e^{2i\varphi(L)/h}(1+ \mathcal{O}(h)).
\]
Inserting this information into \eqref{eq:qc}, we obtain an equivalent equation 
\[
i^{l-k} h^{l+k} (2z^{1/2})^{-l-k-4}(V^{(k)}(0^+)\cdot V^{(l)}(L^-))e^{2i\varphi(L)/h} - 1 =\mathcal{O}(h).
\]
This equation already implies the necessity of Theorem
\ref{theo:existence} (i.e., the fact that resonances in
$\Omega_M(h)$ may only be of the form \eqref{1dresonances}) by
considering the modulus and argument of this equation.  To show
existence of these resonances, consider the function
\[
F(w,E,h) = i^{l-k} h^{l+k} (2E^{1/2})^{-l-k-4}(V^{(k)}(0^+)\cdot V^{(l)}(L^-)) e^{2i(S(E) + wT(E))/h} - 1.
\]
Here $E \in [a,b]$ is treated as a parameter, and $F(w,E,h)$ is
holomorphic in $w \in \CC$. We have used the notation $S(E)$ and
$T(E)$ from Theorem \ref{theo:existence}. Note that if $z=E+w \in
\Omega_M(h),$ then
\[
\varphi(L) = S(E) + wT(E) + \mathcal{O}(|w|^2).
\]
If $|w|$ and $|w|^2/h$ are both small, then $z$ is resonance if and only if $w$ satisfies an equation of the form
\begin{equation} \label{eq:resonanceequationF}
F(w,E,h)  = \mathcal{O}(h + |w| + |w|^2/h)
\end{equation}
with both sides holomorphic in $w$. Now if $n \in N(h)$ and $E_n$ are as in Theorem \ref{theo:existence}, then $F(w_n,E_n,h) = 0$ for the choice
\begin{multline*}
w_n =  \frac{-ih}{2T(E_n)} (l+k)\log (1/h)\\ + \frac{ih}{2T(E_n)} \left( \log |V^{(k)}(0^+) V^{(l)}(L^-)| - (1/2)(l+k+4) \log(4 E_n) \right).
\end{multline*}
Furthermore, $\pa_w F(w_n,E_n,h) = (2i/h)T(E_n)$, and given $\varepsilon >0$ there exists $C>0$ such that
\[
n \in N(h) \text{ and } |w-w_n| \leq \varepsilon h \Longrightarrow |\pa_w^2 F(w,E_n,h)| \leq C h^{-2}.
\]
Thus by Taylor's theorem, for any $A> 0$ we can find $h_0, C_0 > 0$ such that 
\[
|F(w,E_n,h)| \geq A h \log(1/h)^2
\]
for all $h \in (0,h_0)$ and $n \in N(h)$ whenever $|w-w_n| = C_0 h^2 \log(1/h)^2$. On the other hand, there exists $B>0$ independent of $C_0$ such that right hand side of \eqref{eq:resonanceequationF} is bounded by $B h \log (1/h)^2$ whenever $|w-w_n| = C_0 h^2 \log(1/h)^2$ and $h \in (0,h_0)$ uniformly in $n \in N(h)$, shrinking $h_0>0$ depending on $C_0$ if necessary. Thus we first fix $A>B$, choose $h_0,C_0 > 0$ as above, and then apply Rouch\'e's theorem. It follows that for each $h\in (0,h_0)$ and $n \in N(h)$ there exists a unique resonance $z_n \in \Omega_M(h)$ satisfying
\[
z_n = E_n + w_n + \mathcal{O}(h^2\log(1/h)^2),
\]
thus completing the proof of Theorem \ref{theo:existence}. \qed

\appendix 

\section{Proof of Theorem \ref{theo:diffractiveimprovements}} \label{appendix}

In \cite{GaWu:18}, propagation of singularities for operators of the
form $P = h^2\Lap + V$ with $V$ real-valued was discussed. Although
$Q$ is not elliptic at fiber-infinity (unlike $P$), since we are only
considering propagation of singularities in compact subsets of $T^*M$
viewed as the interior of $\overline{T}^*M$, there is little
difference in the proofs. There are two ingredients:
\begin{enumerate} \itemsep6pt 
	\item Propagation of singularities along generalized broken bicharacteristics ($\GBB$s).
	\item Diffractive improvements at hyperbolic and glancing points.
\end{enumerate}

The main difference is in the propagation arguments along $\GBB$s. The preliminary material in \cite[Section 4]{GaWu:18} goes through unchanged provided $p$ is replaced with
\[
q = \tau + |\xi|^2_g + V.
\]
In the neighborhood of a point $m \in Y_M$ we can use coordinates $(t,x^1,x')$, where $(x^1,x')$ are normal coordinates on $Y$ with respect to $g$. We then replace $\tilde P$ with $\tilde Q = Q - (hD_{x_1})^*(hD_{x_1})$ in these local coordinates. 

The elliptic, hyperbolic, and glancing sets are defined in the obvious way; we denote these $\hat \ellip, \hat \hyp, \hat \gl$. In the notation of Section \ref{subsect:tdschrodinger},
\[
\hat \hyp_E = \hat \hyp \cap \{\tau=-E\}, \quad \hat \gl_E = \hat \gl \cap \{\tau=-E\}.
\]
Note that $\hat \hyp \cup \hat \gl$ may intersect fiber-infinity of $\overline{T}^* Y_M$ in directions with $\tau \neq 0$, but of course $\hat \hyp_E \cup \hat \gl_E$ are compact subsets of $T^*Y_M$.

The study of the elliptic set $\hat \ellip$ in \cite[Section 5.1]{GaWu:18} needs only minor modifications provided we stay away from fiber infinity; this amounts to working with compactly microlocalized b-pseudodifferential operators only.

 The most important point is to replace the Dirichlet form associated to $P$ in \cite[Lemma 5.3]{GaWu:18} with the expression
\[
\int_M h^2|d_X Aw|_g^2 + V|Aw|^2 + hD_t Aw \cdot \overline{Aw} \, d\tilde g,
\]
where $d_X$ is the differential on $X$ lifted to $M = \RR \times X$ by the product structure, and $\tilde g$ is the product metric $dt^2 + g$. The proof of \cite[Lemma 5.3]{GaWu:18} applies without change. Modifying the Dirichlet form affects \cite[Eq. 5.3]{GaWu:18}; in the notation there, the relevant replacement is an estimate of the form
\begin{multline*}
\| Aw \|_{H^1_h} \leq C \| GQw\|_{H^{-1}_h} + Ch \|Gw\|_{H^1_h} 
\\ + C_0(\|Aw\|_{L^2} + \|hD_t Aw \|_{L^2}) + \mathcal{O}(h^\infty)\| w \|_{H^1_h},
\end{multline*}
with $C_0 > 0$ independent of $A$, namely one must add $\|D_tAw\|_{L^2}$ to the right hand side. On the other hand, if $A$ has compact b-microsupport, then we can estimate
\[
\| hD_t Aw\|_{L^2} \leq C_1\| Aw \|_{L^2} + \mathcal{O}(h^\infty)\| w \|_{L^2},
\]
where $C_1 > 0$ depends only on the size of the b-microsupport of $A$; thus the important estimate \cite[Eq. 5.3]{GaWu:18} is still valid. The proofs of \cite[Lemmas 5.5, 5.6]{GaWu:18} go through, provided $V$ is replaced with $V+ \tau$. With these modifications, b-elliptic regularity (\cite[Proposition 5.2]{GaWu:18}) continues to hold, at least away from fiber-infinity.

The analysis at $\hat \hyp$ is essentially unchanged, provided one
only consider propagation of singularities away from fiber-infinity;
this is certainly the case near $\hat \hyp_E$. One needs only to
account for the additional localization in $(t,\tau)$, and in the
third step of the proof of \cite[Lemma 5.10]{GaWu:18} we make the
replacement
\begin{equation} \label{eq:replace}
h^2\Lap_k \Longrightarrow h^2 \Lap_k + hD_t.
\end{equation}
In that case, \cite[Proposition 5.8]{GaWu:18} still holds.

Finally, we come to the analogues of \cite[Theorems 2 and 3]{GaWu:18},
which are proved using ordinary pseudodifferential operators. Again,
since in Theorem \ref{theo:diffractiveimprovements} only points $\mu$
in the finite parts of the fiber of $T^*M$ (rather than fiber-infinity
in the compactification) are considered, the proofs in \cite[Section
7]{GaWu:18} are still valid provided we make the replacement
\eqref{eq:replace}.

\bibliographystyle{plain} 
\bibliography{all}
  
\end{document}